\newtheorem{thm}{Theorem}[section]
\newtheorem{lem}[thm]{Lemma}
\newtheorem{prop}[thm]{Proposition}
\newtheorem{cor}[thm]{Corollary}
\newtheorem{rmk}[thm]{Remark}
\newtheorem{definition}[thm]{Definition}
\DeclareMathOperator{\id}{Id}
\DeclareMathOperator{\loc}{loc}
\DeclareMathOperator{\dist}{dist}
\DeclareMathOperator{\dive}{div}
\newcommand{\RR}{\mathbb{R}}     
\newcommand{\NN}{\mathbb{N}}     
\newcommand{\F}{\mathcal{F}}  
\newcommand{\J}{\mathcal{J}}  
\newcommand{\K}{\mathcal{K}}
\newcommand{\e}{\varepsilon} 
\newcommand{\ga}{\gamma}
\newcommand{\la}{\lambda}  
\numberwithin{equation}{section} 
\begin{document}


\title{On the behavior of the free boundary for a one-phase Bernoulli problem with mixed boundary conditions}

\author{
    Giovanni Gravina\\
    Department of Mathematical Analysis\\
    Faculty of Mathematics and Physics \\
    Charles University\\
    Prague, Czech Republic\\
    gravina@karlin.mff.cuni.cz
  \and
    Giovanni Leoni\\
    Department of Mathematical Sciences\\
    Carnegie Mellon University\\
    Pittsburgh, PA, USA\\
    giovanni@andrew.cmu.edu
    }

\maketitle
\begin{abstract}
This paper is concerned with the study of the behavior of the free boundary for a class of solutions to a one-phase Bernoulli free boundary problem with mixed periodic-Dirichlet boundary conditions. It is shown that if the free boundary of a symmetric local minimizer approaches the point where the two different conditions meet, then it must do so at an angle of $\pi/2$. 
\vspace{.3cm}
\newline
\textbf{Key words}: Free boundary problems, boundary regularity, contact points.
\vspace{.2cm}
\newline
\textbf{Mathematics Subject Classification (2010)}: 35R35.
\end{abstract}


\section{Introduction}
In the seminal paper \cite{MR618549}, Alt and Caffarelli considered the minimization problem for the functional 
\begin{equation}
\label{JAC}
J(u) \coloneqq \int_{\Omega}\left(|\nabla u|^2 + \chi_{\{u > 0\}}Q^2\right)\,d\bm{x},
\end{equation}
defined over the class 
\[
K \coloneqq \left\{u \in L^1_{\loc}(\Omega) : \nabla u \in L^2(\Omega;\RR^N) \text{ and } u = u_0 \text{ on } \Gamma \right\}.
\]
Here $\Omega$ is an open connected subset of $\RR^N$, $\partial \Omega$ is Lipschitz continuous, $\Gamma \subset \partial \Omega$ is a measurable set with $\mathcal{H}^{N - 1}(\Gamma) > 0$, $u_0$ a nonnegative function in $L^1_{\loc}(\Omega)$ such that $J(u_0) < \infty$, and $Q$ is a measurable function satisfying
\begin{equation}
0 < q_{\min} \le Q(\bm{x}) \le q_{\max} < \infty 
\end{equation} 
for all $\bm{x} \in \Omega$. Alt and Caffarelli proved the existence of global minimizers and showed that local energy minimizers are nonnegative, locally Lipschitz continuous in $\Omega$, and harmonic in the set $\{ u > 0 \}$. Furthermore, they proved that if $Q \in C^{k,\alpha}$ (resp. analytic), then the free boundary $\partial \{u > 0\}$ of local energy minimizers is locally a curve of class $C^{k + 1, \alpha}$ (resp. analytic) in $\Omega$ provided $N = 2$, while in dimension $N \ge 3$ the reduced free boundary $\partial^{\operatorname{red}}\{ u > 0\}$ is locally a hypersurface of class $C^{k + 1, \alpha}$ (resp. analytic) in $\Omega$ and the singular set
\[
\Sigma_{\operatorname{sing}} \coloneqq \Omega \cap \left\{\partial \{ u > 0\} \setminus \partial^{\operatorname{red}}\{ u > 0\}\right\}
\]
has zero $\mathcal{H}^{N - 1}$ measure. We refer to the papers \cite{MR2082392},  \cite{MR2572253}, \cite{MR3894044}, \cite{MR3385632}, and \cite{MR1759450} for additional results regarding the interior regularity for the free boundaries of local minima in dimension $N \ge 3$.

In \cite{MR1044809}, Berestycki, Caffarelli, and Nirenberg proved regularity of the free boundary up to the fixed boundary for the zero oblique derivative boundary conditions. Their method is a regularization by singular perturbation, i.e., given the approximate identities $\{\beta_{\e}\}_{\e}$, they considered the family of elliptic equations
\[
Lu_{\e} = \beta_{\e}(u_{\e})
\]
and obtained Lipschitz estimates for the solutions $\{u_\e\}_{\e}$ which are uniform with respect to the regularization parameter and therefore carry over in the limit. Following a similar approach, Gurevich \cite{MR1656068} proved Lipschitz regularity up to the boundary for solutions to one and two phase problems with Dirichlet boundary conditions. We refer the reader to the work of Raynor \cite{MR2475326} for an alternative proof of boundary regularity for Neumann boundary conditions.

The behavior of the free boundary near the fixed boundary was investigated by Karakhanyan, Kenig, and Shahgholian in \cite{MR2267752} where they proved that the free boundary detaches tangentially from a smooth of portion of the fixed boundary where Dirichlet boundary conditions are prescribed (see \Cref{tan}). Their techniques apply to both one and two phase problems. In the recent work \cite{MR3916702}, Chang-Lara and Savin proved that for a certain class of solutions to a one-phase Bernoulli problem, the free boundary is a hypersurface of class $C^{1,1/2}$ in a neighborhood of the smooth Dirichlet fixed boundary by relating its behavior to that of a Signorini-type obstacle problem.

         \begin{figure}
         \begin{center}
         \begin{tikzpicture}[blend group=normal, scale=0.9]
         
         \draw [dashed] (0,1) -- (0.5,1); 
         \draw (0.5,1) -- (9.5,1); 
         \draw [dashed] (9.5,1) -- (10,1);
         
         \draw [red, thick] plot [smooth, tension=0.7] coordinates {(5.4,6.7) (4.3,5) (7.3,1.7) (5, 1)};
         \draw [red, dashed, thick] (5.4,6.7) -- (5.9,6.925);
         \draw [red, thick] (5,1) -- (0.5,1);
         \draw [dashed,domain=0:180] plot ({5 + 4.4*cos(\x)}, {1 + 4.4*sin(\x)});
         
         \draw[fill] (5,1) circle [radius=0.06];
         \node [above] at (2,5) {$u > 0$};      
         \node [above] at (8.5,5) {$u = 0$};   
         \node[above] at (8.5,1) {$\partial \Omega$};
         \node[above] at (2,1) {\textcolor{red}{$\partial \{ u > 0\}$}};
         \end{tikzpicture}
         \caption{The free boundary meets the Dirichlet fixed boundary tangentially.} 
         \label{tan}
         \end{center}
         \end{figure}
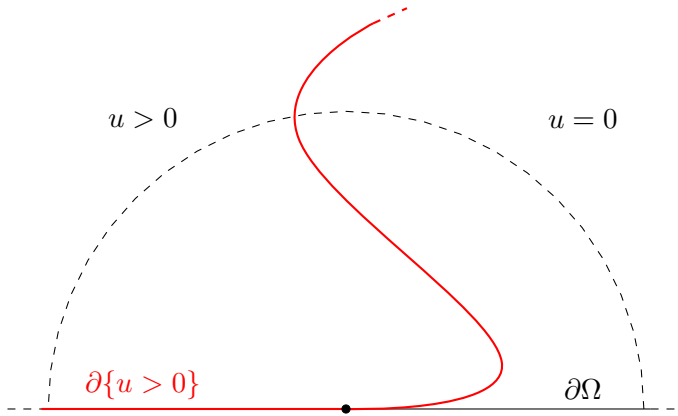

We remark that if $\Gamma$ is sufficiently smooth and $\overline{\Gamma} \neq \partial \Omega$, minimizers of $J$ (see (\ref{JAC})) must satisfy natural boundary conditions on $\Sigma \coloneqq \partial \Omega \setminus \overline{\Gamma}$, i.e., they must solve the mixed Dirichlet-Neumann boundary value problem
\begin{equation}
\label{fbAC}
\left\{
\arraycolsep=1.4pt\def\arraystretch{1.6}
\begin{array}{rlll}
\chi_{\{ u > 0 \}}\Delta u = & 0,& u \ge 0 & \text{ in } \Omega, \\
|\nabla u| = & Q,& u = 0 & \text{ on } \Omega \cap \partial\{u > 0\}, \\
u = & u_0 & & \text{ on } \Gamma, \\
\partial_{\nu}u = & 0 & & \text { on } \Sigma.
\end{array}
\right.
\end{equation}
To our knowledge, the regularity of solutions to (\ref{fbAC}) and the behavior of their free boundaries in a neighborhood of the points where the two different boundary conditions meet is still an open problem. 

In this paper we examine the boundary regularity for a certain class of variational solutions to a one-phase problem near contact points on $\partial \Gamma$ in the simplified situation where $N = 2$ and periodicity conditions are prescribed on $\Sigma$. In addition, we investigate the behavior of the free boundary near such points. 

To be precise, for $m, h, \ga, \la > 0$ we let 
\begin{align}
\label{strip}
\Omega & \coloneqq \left(-\frac{\la}{2},\frac{\la}{2}\right) \times (0, \infty), \\
Q(\bm{x}) & \coloneqq \sqrt{(h - y)_+} \quad \text{ for } \bm{x} = (x,y) \in \RR^2_+, \notag
\end{align}
define the Sobolev space 
\[
H^1_{\operatorname{per}}(\Omega) \coloneqq \left\{u \in H^1_{\loc}(\RR^2_+) : u(x + \la, y) = u(x,y) \text{ for } \mathcal{L}^2 \text{-a.e.\@ } \bm{x} = (x,y) \in \RR^2_+\right\}
\]
consisting of all $\la$-periodic functions in the $x$ variable, and consider the following version of the Alt-Caffarelli functional 
\begin{equation}
\label{Jper}
\J_h(u) \coloneqq \int_{\Omega}\left(|\nabla u|^2 + \chi_{\{u > 0\}}(h - y)_+\right)\,d\bm{x},
\end{equation}
defined for $u$ in the closed convex set
\begin{equation}
\label{Kg}
\K_{\ga} \coloneqq \left\{u \in H^1_{\operatorname{per}}(\Omega) : u = u_0 \text{ on } \Gamma_{\ga}\right\}.
\end{equation}
Here the Dirichlet datum $u_0$, defined by
\begin{equation}
\label{u0}
u_0(x,y) \coloneqq m\left(1 - \frac{y}{\ga}\right)_+,
\end{equation}
is prescribed on 
\[
\Gamma_{\ga} \coloneqq \left(-\frac{\la}{2}, \frac{\la}{2}\right) \times \{0\} \cup \left\{\pm \frac{\la}{2}\right\} \times (\ga, \infty) \subset \partial \Omega.
\]

Throughout the rest of the paper we restrict our attention to the following class of variational solutions. 
\begin{definition}
\label{sm}
Given $u \in \K_{\ga}$, we say that $u$ is a \emph{local minimizer} of the functional $\J_h$ if there exists $\e_0 > 0$ such that $\J_h(u) \le \J_h(v)$ for every $v \in \K_{\ga}$ with 
\[
\|\nabla (u - v)\|_{L^2(\Omega;\RR^2)} + \|\chi_{\{ u > 0\}} - \chi_{\{ v > 0\}}\|_{L^1(\Omega)} \le \e_0.
\] 
Moreover, we say that $u$ is \emph{symmetric} if the following conditions are satisfied:
\begin{itemize}
\item[$(i)$] the support of $u$ in $\Omega$ coincides with its Steiner symmetrization with respect to the line $\{x = 0\}$; 
\item[$(ii)$] $u$ coincides with its symmetric decreasing rearrangement with respect to the variable $x$, i.e., for every $y \in \RR_+$ the map $x \mapsto u(x,y)$ is even, nondecreasing in $(-\la/2,0)$, and nonincreasing in $(0,\la/2)$.
\end{itemize}
\end{definition}
Solutions satisfying conditions $(i), (ii)$ can be observed among global minimizer of $\J_h$ by means of standard symmetrization techniques; for more information, we refer the reader to \cite{GL18} (see also \Cref{exofmin} and \Cref{representative} below).

Our first main result can then be stated as follows.

\begin{thm}
\label{bgl}
Given $m, h, \la > 0$ and $\ga < h$, let $\Omega$, $\J_h$, and $\K_{\ga}$ be defined as in $(\ref{strip})$, $(\ref{Jper})$, and $(\ref{Kg})$, respectively. Let $u \in \K_{\ga}$ be a symmetric local minimizer of $\J_h$ in the sense of \Cref{sm} and assume that $\bm{x}_0 = (-\la/2,\ga)$ is an accumulation point for the free boundary on $\partial \Omega$, i.e.,
\begin{equation}
\label{x0acc}
\bm{x}_0 \in \overline{\partial\{u > 0\} \cap \Omega}.
\end{equation}
Then $\nabla u$ is bounded in a neighborhood of $\bm{x}_0$.
\end{thm}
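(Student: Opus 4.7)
My plan is to derive the gradient bound near $\bm{x}_0$ in three stages: reflection, linear growth, and interior harmonic estimates. The Steiner symmetry $u(x,y)=u(-x,y)$ combined with the $\la$-periodicity implies that $u$ is symmetric about the line $\{x=-\la/2\}$, namely $u(-\la/2+s,y)=u(-\la/2-s,y)$ for $|s|$ small. Hence $u$ extends by even reflection to a function $\tilde{u}\in H^1(B_\delta(\bm{x}_0))$ for some small $\delta>0$. For $y<\ga$ the reflection preserves harmonicity in $\{\tilde{u}>0\}$, since symmetry forces $\partial_x u(-\la/2,y)=0$; for $y>\ga$ the Dirichlet datum $u_0\equiv 0$ ensures that $\tilde{u}$ vanishes on the slit $\ell:=\{-\la/2\}\times[\ga,\ga+\delta)$. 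The reflected function $\tilde{u}$ is therefore continuous, non-negative, subharmonic, harmonic in its positivity set, and satisfies $|\nabla\tilde{u}|=Q$ on $\partial\{\tilde{u}>0\}\setminus\ell$.

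The heart of the proof is the linear growth estimate $\sup_{B_r(\bm{x}_0)}\tilde{u}\le Cr$ for all sufficiently small $r>0$. I would argue by contradiction and blow-up: if $M_k:=r_k^{-1}\sup_{B_{r_k}(\bm{x}_0)}\tilde{u}\to\infty$ along some $r_k\downarrow 0$, then the normalized rescalings $\tilde{u}_k(\bm{x}):=\tilde{u}(\bm{x}_0+r_k\bm{x})/(r_k M_k)$ satisfy $\sup_{B_1}\tilde{u}_k=1$, are harmonic in their positivity sets, vanish on the rescaled slit $\{x=0\}\times(0,\delta/r_k)$, and satisfy the degenerate free boundary condition $|\nabla\tilde{u}_k|=Q(\bm{x}_0+r_k\bm{x})/M_k\to 0$. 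A compactness argument then extracts a non-trivial subsequential limit $\tilde{u}_\infty\ge 0$ on $\RR^2$, harmonic in $\{\tilde{u}_\infty>0\}$, vanishing on $\{x=0\}\times(0,\infty)$, with $|\nabla\tilde{u}_\infty|=0$ on the remainder of $\partial\{\tilde{u}_\infty>0\}$. The Hopf boundary-point lemma at any regular boundary point of $\{\tilde{u}_\infty>0\}$ then forces $\tilde{u}_\infty\equiv 0$, contradicting the normalization $\sup_{B_1}\tilde{u}_\infty\ge 1$.

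Once the linear growth is in hand, the gradient bound follows from combining it with the Alt--Caffarelli non-degeneracy and interior gradient estimates for harmonic functions on subdisks $B_d(\bm{x})\subset\{\tilde{u}>0\}$, where $d:=\dist(\bm{x},\partial\{\tilde{u}>0\})$. The estimate $|\nabla\tilde{u}(\bm{x})|\le C d^{-1}\sup_{B_{d/2}(\bm{x})}\tilde{u}$, together with the linear growth $\tilde{u}\le C|\bm{y}-\bm{x}_0|$ in $B_{d/2}(\bm{x})$ and a geometric control on the positivity set near $\bm{x}_0$, yields $|\nabla\tilde{u}(\bm{x})|\le C$ uniformly. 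The main obstacle lies in the second stage: establishing the compactness of the rescaled sequence, confirming the non-triviality of the blow-up limit $\tilde{u}_\infty$ in the presence of the Dirichlet slit, and justifying the Hopf lemma at the slit tip all require careful arguments involving the non-degeneracy inherited from the one-phase structure and, potentially, a monotonicity formula tailored to the mixed boundary geometry.
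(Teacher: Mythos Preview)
Your route is genuinely different from the paper's. The paper never runs a blow-up/compactness argument for the Lipschitz bound; instead it proves directly, for the rescalings $w(\bm{x})=u(\bm{x}_0+\mu\bm{x})/\mu$, the pointwise estimate $w(\bm{x})\le c\min\{d(\bm{x}),D(\bm{x})\}$, where $d$ is the distance to the Bernoulli free boundary and $D$ is the distance to the Dirichlet ray $\{(0,y):y\ge 0\}$. The bound $w\le c_0 d$ for $d<D$ comes from a logarithmic barrier together with the Bernoulli condition $\partial_\nu w\le\sqrt{h}$; the case $D\le d$ is then handled by Harnack chains and, decisively, by the boundary Harnack principle along the Dirichlet ray. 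Interior and boundary gradient estimates for harmonic functions then give $|\nabla w|\le C$.

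There are two concrete gaps in your scheme. First, the contradiction at the blow-up limit does not close. Even granting compactness and that the free boundary condition degenerates, Hopf's lemma only rules out \emph{genuine} free boundary for $\tilde u_\infty$; it leaves open the possibility that $\tilde u_\infty$ is a nontrivial nonnegative harmonic function on the slit plane $\RR^2\setminus(\{0\}\times[0,\infty))$ vanishing on the slit and even in $x$. Such functions exist: in polar coordinates on the half-plane $\{x>0\}$ with Dirichlet data on $\{x=0,y>0\}$ and Neumann data on $\{x=0,y<0\}$, the separated solution $r^{1/2}\bigl(\cos(\theta/2)-\sin(\theta/2)\bigr)$ is positive and extends by even reflection. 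Without growth control at infinity (which you do not have, since $r_kM_k=\sup_{B_{r_k}}u\to 0$), you cannot exclude this profile, and your appeal to a monotonicity formula here would be circular: in this setting the boundary Weiss identity (the paper's \Cref{bmf}) takes the Lipschitz bound as a hypothesis.

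Second, even if you had the linear growth $\tilde u\le C|\bm{x}-\bm{x}_0|$, the passage to $|\nabla\tilde u|\le C$ is not short. At a point $\bm{y}$ with $D(\bm{y})\ll|\bm{y}-\bm{x}_0|$, the half-ball gradient estimate along the slit only gives $|\nabla\tilde u(\bm{y})|\lesssim \rho^{-1}\sup_{B_\rho^+}\tilde u\lesssim |\bm{y}-\bm{x}_0|/\rho$, where $\rho$ is the largest radius with $B_\rho^+\subset\{\tilde u>0\}$; if the free boundary comes close to the slit this blows up. Controlling this regime is precisely what the ``geometric control'' you invoke would have to do, and it is exactly the content of the paper's Step~5, whose proof rests on the boundary Harnack principle rather than on linear growth from $\bm{x}_0$.
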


\Cref{bgl} gives a uniform estimate on the gradient of a symmetric local minimizer in a neighborhood of the point $\bm{x}_0$. We remark that this is the optimal regularity. This kind of result is commonly referred to as a ``bounded gradient lemma'' (see, for example, Lemma 8.1 and 8.2 in \cite{MR682265}, Lemma 2.1 and 2.2 in \cite{MR642623}, and 3.7 Theorem in \cite{MR518852}). Our main contribution is proving that the estimate holds up to the fixed Dirichlet boundary, uniformly with respect to the distance from the point $\bm{x}_0$. This is accomplished through the use of a boundary Harnack principle (see Theorem 11.5 in \cite{MR2145284}). It is important  to observe that the proof of \Cref{bgl} is rather delicate. Indeed, it is well known that solutions of elliptic equations with mixed boundary conditions exhibit a singular behavior near the region where the boundary conditions change (see, e.g., \cite{dauge}, \cite{GL19}, \cite{grisvard85}, \cite{kondratiev}, and \cite{MR0601607})). In our case, the situation is further complicated by the fact that at this point we do not yet know the behavior of the free boundary near $\bm{x}_0$, so none of the standard theory for mixed problems can be applied since it assumes either smooth boundary or a corner.

The following theorem, which is the second main result of the paper, states that the free boundary of a symmetric local minimizer meets the endpoint of the Dirichlet fixed boundary at an angle of $\pi/2$ (see \Cref{perp}).

\begin{thm}
\label{mainthm}
Under the assumptions of \Cref{bgl}, we have that the portion of the free boundary $\partial \{u > 0\}$ in $\{\bm{x} \in \Omega : -\la/2 < x < 0\}$ can be described by the graph of a function $x = g(y)$ and furthermore, the free boundary meets the fixed boundary at the point $\bm{x}_0$ with horizontal tangent, i.e.,
\[
\lim_{y \to \ga}\frac{|g(y) - g(\ga)|}{|y - \ga|} = \infty.
\]
\end{thm}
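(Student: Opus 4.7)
The plan is to combine the symmetry of $u$ with a blow-up argument at $\bm{x}_0$, using the Lipschitz bound of \Cref{bgl} as the crucial input. First, conditions (i) and (ii) of \Cref{sm} directly give the graph structure: for each $y > 0$, the slice $\{x \in (-\la/2, \la/2) : u(x,y) > 0\}$ is either empty or of the form $(-w(y), w(y))$ for some $w(y) \in (0, \la/2]$, so the portion of the free boundary in the left half $\{-\la/2 < x < 0\}$ is automatically the graph $x = g(y) := -w(y)$ over the set of $y$ with $0 < w(y) < \la/2$. Lipschitz continuity of $u$ together with the accumulation hypothesis $(\ref{x0acc})$ ensures that $g$ extends continuously to $y = \ga$ with value $-\la/2$. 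Combining the $\la$-periodicity of $u$ with evenness in (ii) yields the identity $u(-\la/2 - t, y) = u(-\la/2 + t, y)$, so $u$ (extended periodically in $x$) becomes a single Alt--Caffarelli variational solution on a two-sided neighborhood $U$ of $\bm{x}_0$, and the Dirichlet condition $u_0 \equiv 0$ on $\{x = \pm \la/2\} \times (\ga, \infty)$ forces $u \equiv 0$ on the segment $\{x = -\la/2\} \cap \{y > \ga\} \cap U$. \Cref{bgl} then provides the uniform bound $|\nabla u| \le L$ on $U$.

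Set $u_r(\bm x) := r^{-1} u(\bm{x}_0 + r \bm x)$ for $r \searrow 0$. The Lipschitz bound and standard Alt--Caffarelli compactness produce a subsequence that converges locally uniformly and in $H^1_{\loc}(\RR^2)$ to a function $u_\infty \colon \RR^2 \to [0,\infty)$ which is a global minimizer of the one-phase Bernoulli functional with constant weight $q := \sqrt{h - \ga}$ (the limit of $Q(\bm{x}_0 + r \bm x)$ as $r \to 0$). Nondegeneracy at free boundary points yields Hausdorff convergence $\partial\{u_r > 0\} \to \partial\{u_\infty > 0\}$ on compact subsets. In the blow-up coordinates $(\xi, \eta) := (x + \la/2, y - \ga)$, $u_\infty$ inherits the evenness $u_\infty(-\xi, \eta) = u_\infty(\xi, \eta)$ and vanishes on the ray $\{\xi = 0,\, \eta \ge 0\}$.

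In two dimensions, every nontrivial global minimizer of the one-phase Bernoulli functional with constant weight $q$ is a half-plane profile $q\,(\bm\nu \cdot \bm x)_+$ for some $\bm\nu \in S^1$. Evenness in $\xi$ forces $\bm\nu \in \{(0, \pm 1)\}$, and the vanishing on $\{\xi = 0,\, \eta > 0\}$ selects $\bm\nu = (0,-1)$, giving $u_\infty(\xi, \eta) = \sqrt{h - \ga}\,(-\eta)_+$, whose free boundary is the horizontal line $\{\eta = 0\}$. If the conclusion of \Cref{mainthm} were to fail, there would exist $C > 0$ and $y_k \to \ga$ with $|g(y_k) + \la/2| \le C |y_k - \ga|$; passing to a subsequence in which $y_k - \ga$ has a constant sign and setting $r_k := |y_k - \ga|$, the rescaled free boundary points
\[
\bm p_k := \tfrac{1}{r_k}\bigl(g(y_k) + \tfrac{\la}{2},\; y_k - \ga\bigr) \in \partial\{u_{r_k} > 0\}
\]
have bounded first coordinate and second coordinate $\pm 1$. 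Hausdorff convergence then forces any limit of $\bm p_k$ to lie on $\{\eta = 0\}$, which contradicts $\eta = \pm 1$.

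The entire argument hinges on the uniform Lipschitz estimate up to the corner $\bm{x}_0$ supplied by \Cref{bgl}; the proof of that bound, rather than the blow-up above, is the genuinely delicate part. Beyond that, the remaining checks are that $u_\infty$ is actually a \emph{global} minimizer of the limiting functional (so that the two-dimensional rigidity applies) and that the free boundaries converge in Hausdorff sense in the present mixed-boundary setting --- both are routine adaptations of the Alt--Caffarelli compactness machinery.
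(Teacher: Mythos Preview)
Your overall strategy mirrors the paper's, but the classification of blow-ups has a genuine gap. The assertion that ``$u$ (extended periodically in $x$) becomes a single Alt--Caffarelli variational solution on a two-sided neighborhood $U$ of $\bm{x}_0$'' is not correct in the sense you need. Periodicity plus evenness does give the reflection identity $u(-\la/2-t,y)=u(-\la/2+t,y)$, but the Dirichlet datum $u_0\equiv 0$ on $\{x=-\la/2\}\times(\ga,\infty)$ is a \emph{constraint} on the admissible class, not a consequence of minimality. Competitors in $\K_{\ga}$ must vanish on that segment, and a line segment has positive $H^1$-capacity in two dimensions, so this constraint survives under blow-up. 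The limit $u_\infty$ is therefore only a minimizer of the constant-weight Bernoulli functional among functions that vanish on the ray $\{\xi=0,\ \eta\ge 0\}$ (this is exactly the content of \Cref{bumin} in the paper, where the class $\K(w,R)$ carries that condition). The two-dimensional rigidity ``every nontrivial global minimizer is a half-plane'' applies to \emph{unconstrained} minimizers and does not transfer automatically to this constrained class; a priori nothing rules out, say, the homogeneous profile $q|\xi|$ or more exotic non-homogeneous configurations compatible with the ray constraint.

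The paper closes this gap not by compactness but by proving a Weiss-type boundary monotonicity formula at the mixed point $\bm{x}_0$ (\Cref{bmf}), which forces every blow-up to be homogeneous of degree one; only then does an elementary ODE argument on sectors, together with the symmetry and the minimality in $\K(w,R)$, reduce the possibilities to $0$ or $\sqrt{h-\ga}\,(-\eta)_+$ (\Cref{class}). This monotonicity step is precisely the ``delicate part'' you defer, and it is not a routine adaptation of Alt--Caffarelli compactness. A secondary point: the Hausdorff convergence of free boundaries you invoke holds only locally in $\RR^2\setminus\{(0,\eta):\eta\ge 0\}$ (cf.\ \Cref{fbchi}), so if a subsequence of your rescaled points $\bm p_k$ converges to $(0,1)$ you cannot conclude directly; the paper handles the case $y_n\to\ga^+$ by testing at a point like $(\alpha+1,1)$, safely away from the ray, and using monotonicity of $s\mapsto u_n(s,1)$.
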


The importance of \Cref{bgl} is that it allows us to consider blow-up limits. Indeed, as it is often the case for this kind of regularity results (see, for example, \cite{MR2281197}, \cite{MR2237208}, \cite{MR3175292}, \cite{MR1404319}, and \cite{MR2267752}), the proof of \Cref{mainthm} relies heavily on the complete characterization of blow-up solutions (see \Cref{class}). This, in turn, is derived from a monotonicity formula. To be precise, we show that the boundary monotonicity formula of Weiss (see Theorem 3.3 and Corollary 3.4 in \cite{MR2047400}, see also \cite{MR2810856} and  \cite{MR1759450}) holds at the point $\bm{x}_0$ for local minimizers of $\J_h$ in $\K_{\ga}$ with bounded gradients. The main difficulty in the proof of \Cref{mainthm} is that Weiss' results (see Section 4 in \cite{MR2047400}) are restricted to Dirichlet conditions and rely on a non-degeneracy condition of the gradient of the Dirichlet datum. Moreover, his definition of solution is too restrictive for our purposes. Thus adapting his results to the present setting is quite involved.

         \begin{figure}
         \begin{center}
         \begin{tikzpicture}[blend group=normal, scale=0.9]
         
         \draw (3,3) -- (3,9.5);
         \draw [dashed] (3, 9.5) -- (3,10);
         \draw [dashed] (0,1.5) -- (3,1.5);
         \draw (3,1.5) -- (9.5,1.5);
         \draw [dashed] (9.5,1.5) -- (10,1.5);
         
         \draw [red, thick, dashed] (0,4.2) to [out=-10, in = 180] (3,3);
         \draw [red, thick] (3,3) to [out=0, in=190] (6, 4.2) to [out=10, in=270] (7.8,5.7) to [out=90, in= 270] (6.3, 7.6) to [out=90, in=225] (9,9);
         \draw [red, thick, dashed] (9,9) -- (9.5,9.5);
         
         \draw[fill] (3, 3) circle [radius=0.06];
         \node [below] at (3, 2.8) {$(-\la/2, \ga)$};
         \node [above] at (6, 5) {$u = 0$};      
         \node [above] at (6, 3) {$u > 0$};   
         \node [right] at (2, 8.5) {$\partial \Omega$};
         \node [above] at (8.2, 6.5) {\textcolor{red}{$\partial \{ u > 0\}$}};
         \node [above] at (8.2, 1.5) {$y = 0; u \equiv m$};
         \end{tikzpicture}
         \caption{The free boundary hits the point $(-\la/2,\ga)$ at an angle of $\pi/2$.} 
         \label{perp}
         \end{center}
         \end{figure}
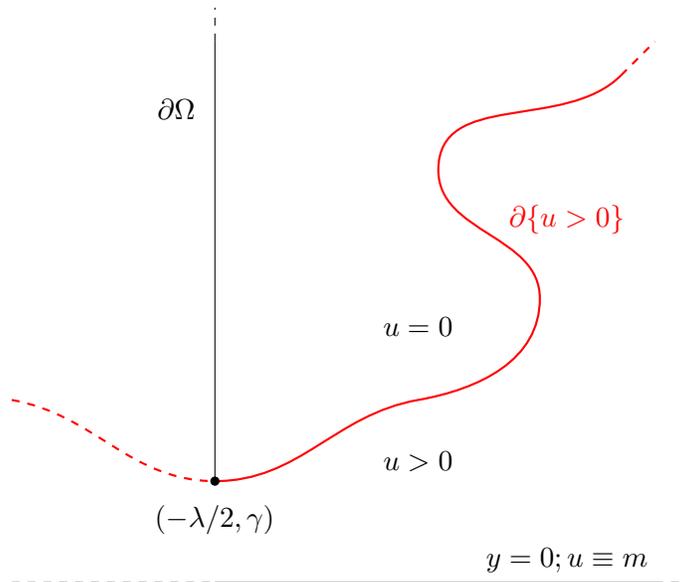

While we believe that \Cref{bgl} and \Cref{mainthm} are of interest in themselves, our original motivation is the theory of periodic traveling waves. Indeed, if one considers the steady irrotational flow over a flat impermeable bed of a two-dimensional inviscid incompressible fluid acted on by gravity, the equations of motion can be rewritten as a Bernoulli-type free boundary problem for a stream function (see, for example, \cite{MR2753609} and \cite{MR2604871}). By denoting $u$ the stream function, $\la$ the wavelength, and letting $m,h$ be renormalized constants related to mass flux and hydraulic head, respectively, one is lead to consider the free boundary problem 

\begin{equation}
\label{fbWW}
\left\{
\arraycolsep=1.4pt\def\arraystretch{1.6}
\begin{array}{rll}
\Delta u = & 0 & \text{ in } \Omega \cap \{u > 0\}, \\
|\nabla u| = & \sqrt{(h - y)_+} & \text{ on } \Omega \cap \partial\{u > 0\}, \\
u = & m & \text{ on } \{y = 0\}.
\end{array}
\right.
\end{equation}

Although many results on water waves have been obtained by mapping the domain of the fluid into a fixed domain in the complex plane by means of a hodograph transform (see, for example, \cite{MR869412}, \cite{MR666110}, \cite{MR1446239}, \cite{MR1883094}, \cite{MR513927}; see also \cite{MR2604871}, \cite{MR0502787}, \cite{MR869413}, \cite{MR2038344}), in recent years variational approaches have been proposed to tackle these kind of problems (see, for example, \cite{MR2915865}, \cite{GL18}, \cite{MR2810856}, \cite{MR2995099}, \cite{MR2928132}, \cite{MR2959383}). The advantages of considering a variational formulation of  problem (\ref{fbWW}) are twofold: it allows for more general geometries such as multiple air components, while at the same time it retains the physical intuition of the model. 

On the other hand, a free boundary approach for the existence of periodic water waves is a notoriously difficult problem as variational solutions to (\ref{fbWW}), i.e., minimizers of the functional $\J_h$ in the class 
\[
\K \coloneqq \left\{u \in H^1_{\operatorname{per}}(\Omega) : u(\cdot,0) = m\right\},
\]
are \emph{one-dimensional solutions of the form $u = u(y)$}, so that the free surface is \emph{flat}. 

For this reason, in \cite{GL18} we added as a constraint a vertical Dirichlet condition as in $\K_{\ga}$ and proved that choosing $\ga$ opportunely has the effect of eliminating trivial solutions from the domain of $\J_h$. For the convenience of the reader, the precise statements of our results are reported below in \Cref{background}.  

The boundary regularity for local minimizers of $\J_h$ in $\K_{\ga}$ and their free boundaries away from the points $(\pm\la/2, \ga)$ is well understood as a consequence of the aforementioned results (see \Cref{summary}). Indeed, due to the periodic boundary conditions below the line $\{y = \ga\}$, if the free boundary $\partial \{u > 0\}$ of a local minimizer touches the fixed boundary strictly below this line, then the classical  interior regularity of \cite{MR618549} forces it to be regular across periods. On the other hand, if the free boundary touches the fixed boundary strictly above that line, then it must detach tangentially from the fixed boundary; this would cause a cut in the fluid domain and the formation of a cusp on the free surface. Consequently, this work settles an important issue that was left open in \cite{GL18}.

\begin{figure}[h]
\centering
\begin{tikzpicture}[blend group=normal, scale=1.1]

\draw [dashed] (0,0) -- (0.3,0);
\draw (0.3,0) -- (3,0);
\draw [dashed] (3,0) -- (3.3,0);
\draw (0.3, 1.2) -- (0.3, 3);
\draw [dashed] (0.3, 3) -- (0.3, 3.3);
\draw [fill] (0.3,1.2) circle [radius=0.04];
\draw [red, thick] (0.3,1.6) to [out=90, in=185] (2.8,2.9);
\draw [red, thick, dashed] (2.8,2.9) to (3.1,2.92);
\draw [fill, red] (0.3,1.6) circle [radius=0.04];

\draw [dashed] (4,0) -- (4.3,0);
\draw (4.3,0) -- (7,0);
\draw [dashed] (7,0) -- (7.3,0);
\draw (4.3, 1.2) -- (4.3, 3);
\draw [dashed] (4.3, 3) -- (4.3, 3.3);
\draw [fill] (4.3,1.2) circle [radius=0.04];
\draw [red, thick] (4.3,1.2) to [out=0, in=215] (6.8,2.5);
\draw [red, thick, dashed] (6.8,2.5) to (7.1,2.7);
\draw [fill, red] (4.3,1.2) circle [radius=0.04];

\draw [dashed] (8,0) -- (8.3,0);
\draw (8.3,0) -- (11,0);
\draw [dashed] (11,0) -- (11.3,0);
\draw (8.3, 1.2) -- (8.3, 3);
\draw [dashed] (8.3, 3) -- (8.3, 3.3);
\draw [fill] (8.3,1.2) circle [radius=0.04];
\draw [red, thick] (8.3,0.8) to [out=0, in=215] (10.8,2.3);
\draw [red, thick, dashed] (10.8,2.3) to (11.1,2.5);
\draw [fill, red] (8.3,0.8) circle [radius=0.04];

\node [above] at (2, 0) {$y = 0; u \equiv m$};
\node [above] at (6, 0) {$y = 0; u \equiv m$};
\node [above] at (10, 0) {$y = 0; u \equiv m$};
\end{tikzpicture}
\caption{Qualitative behavior of the free boundary near the fixed boundary.}
\label{summary}
\end{figure}
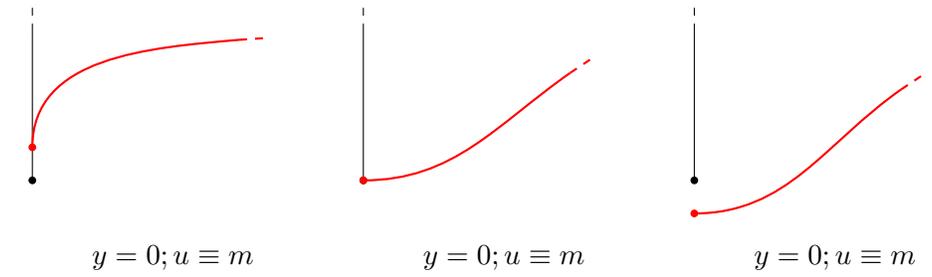

Let us also remark that if one were able to prove that for some choice of the parameters $m, h, \ga, \la$ there exists a local minimizer with the property that every contact point belongs to the set $\{y \le \ga\}$, then by \Cref{mainthm} such a minimizer would solve (\ref{fbWW}) in the entire half-plane $\RR^2_+$. This would provide the first variational proof of the existence of regular water waves which does not rely on Nekrasov's equation (see the classical papers \cite{MR138284}, \cite{MR0502787}). This is ongoing work.

Independently of their applications to the theory of water waves, we believe that the techniques presented in this paper are of interest in themselves and could be applied in other contexts.

\subsection{Plan of the paper}
\noindent Our paper is organized as follows: for the convenience of the reader, in \Cref{background} we recall some well-known results on the existence and regularity of local minimizers of the functional $\J_h$ and state the non-flatness result of \cite{GL18} (see \Cref{exofgamma}). In Section 3 we recall basic properties of symmetric local minimizers and prove that their free boundaries can be described by continuous $y$-graphs (see \Cref{gcont}). Section 4 is dedicated to the proof of \Cref{bgl}. In Section 5 we collect some preliminary results on blow-up limits which will prove useful in the following sections. In Section 6 we extend Weiss' boundary monotonicity formula to our framework. Finally, in Section 7 we give a complete characterization of blow-up solutions (see \Cref{class}) and conclude this work by presenting the proof of  \Cref{mainthm}.

\section{Background results}
\label{background}

The following theorem summarizes the classical existence and regularity theory due to Alt and Caffarelli \cite{MR618549}.
\begin{thm}
\label{exofmin}
Given $m, h, \ga, \la > 0$, let $\Omega$, $\K_{\ga}$, and $\J_h$ be defined as in $(\ref{strip})$, $(\ref{Jper})$, and $(\ref{Kg})$, respectively. Then the minimization problem for $\J_h$ in $\K_{\ga}$ admits a solution. Furthermore, if $u \in \K_{\ga}$ is a local minimizer of the functional $\J_h$, the following hold: 
\begin{itemize}
\item[$(i)$] $u$ is subharmonic in $\Omega$;
\item[$(ii)$] $u$ is locally Lipschitz continuous in $\Omega$; 
\item[$(iii)$] $u$ is harmonic in the set $\{u > 0\}$;
\item[$(iv)$] for any subset $K$ compactly contained in $(-\la/2,\la/2) \times (0,h)$ and any $\bm{x} \in \partial\{u > 0\} \cap K$ there exist a number $\rho > 0$, an analytic function $f$, and a set of local coordinates such that the free boundary $\partial \{u > 0\}$ coincides with the graph of $f$ in $B_{\rho}(\bm{x})$. Furthermore, if we let $\nu$ be the inner unit normal vector to $\{u > 0\}$ at $\bm{x} = (x,y)$, then
\[
\partial_{\nu}u(\bm{x}) = \sqrt{h - y}.
\] 
\end{itemize}
\end{thm}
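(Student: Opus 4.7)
The plan is to adapt the classical Alt--Caffarelli existence and regularity theory of \cite{MR618549} to this particular functional and domain. For the existence part, I would use the direct method of the calculus of variations. Given a minimizing sequence $\{u_n\} \subset \K_{\ga}$, the Dirichlet datum $u_0$ on $\Gamma_\ga \supset (-\la/2,\la/2) \times \{0\}$ combined with the Poincar\'e inequality and the uniform bound $\J_h(u_n) \le \J_h(u_0) < \infty$ gives uniform $H^1$ control on each horizontal slab $(-\la/2,\la/2) \times (0, R)$. Extracting a subsequence converging weakly in $H^1_{\loc}$ and a.e., the Dirichlet energy is lower semicontinuous by convexity, while the measure term is handled by Fatou's lemma together with $\chi_{\{u > 0\}} \le \liminf_n \chi_{\{u_n > 0\}}$ a.e. (since pointwise convergence on $\{u > 0\}$ forces $u_n > 0$ eventually). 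As $(h-y)_+$ is supported in $\{y \le h\}$, the limit lies in $\K_\ga$ and attains the infimum.

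The regularity assertions (i), (ii), (iii) follow from local minimality via standard variations. For (i), testing with $v = u - \e \phi$, $\phi \in C_c^\infty(\Omega)$, $\phi \ge 0$, we have $\{v > 0\} \subset \{u > 0\}$, so the measure term in $\J_h(v) - \J_h(u)$ only decreases, leaving $\int \nabla u \cdot \nabla \phi \le 0$ after dividing by $\e$ and letting $\e \to 0^+$. For (iii), testing with $v = u \pm \e \phi$ for $\phi \in C_c^\infty(\{u > \delta\})$, the positivity sets agree for small $\e$ and the measure term disappears, producing $\int \nabla u \cdot \nabla \phi = 0$. For (ii), the local Lipschitz estimate is the heart of Sections 3--5 of \cite{MR618549}: since $Q^2 = (h-y)_+$ is bounded on every bounded subset of $\Omega$, the comparison with harmonic replacements on balls together with the radial non-degeneracy competitors yields $\|\nabla u\|_{L^\infty(K)} \le C(\dist(K, \partial \Omega), \|Q\|_\infty)$ for every $K \Subset \Omega$.

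For (iv), I would specialize to compact subsets $K \Subset (-\la/2, \la/2) \times (0, h)$. On such $K$, the weight $Q^2 = h - y$ is bounded away from zero and is real analytic, so the Alt--Caffarelli free boundary regularity theorem in dimension $N = 2$ (where no singular points occur) gives a $C^{1,\alpha}$ local graph representation through the flatness-implies-regularity mechanism, and an analytic bootstrap via a partial hodograph/Legendre transform yields the analytic function $f$. The free boundary condition $\partial_\nu u = \sqrt{h - y}$ is then obtained from inner variations: differentiating the map $t \mapsto \J_h(u \circ \Phi_t^{-1})$ at $t = 0$ along the flow $\Phi_t(\bm{x}) = \bm{x} + tX(\bm{x})$ of $X \in C_c^1(K; \RR^2)$ produces a Rellich-type identity; once $\partial \{u > 0\}$ is known to be $C^1$ and $u$ is $C^1$ up to it from the positive side, the divergence theorem converts the bulk integral into a boundary term on $\partial\{u > 0\}$ that forces $|\nabla u|^2 = h - y$ there. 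Since $u \equiv 0$ on the free boundary, $\nabla u$ points along $\nu$ and the stated formula follows.

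The main obstacle is that the Alt--Caffarelli theory is classically formulated for bounded Lipschitz domains with Dirichlet data on all of $\partial \Omega$, whereas $\Omega$ here is an unbounded strip carrying mixed periodic--Dirichlet data. This is resolved by exploiting the $\la$-periodicity in $x$: $u$ may be regarded as a function on the cylinder $(\RR/\la\ZZ) \times (0, \infty)$, on which every one of the assertions (i)--(iv) is purely interior and local; the original arguments then carry through once periodicity has been built into the competitor class. A secondary subtlety is the degeneracy of $Q$ at $\{y = h\}$, which is precisely the reason that (iv) is phrased only on compact subsets of $(-\la/2, \la/2) \times (0, h)$ rather than on all of $\Omega$.
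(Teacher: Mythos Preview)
Your proposal is correct and follows the same approach as the paper: both treat the theorem as a direct consequence of the classical Alt--Caffarelli theory in \cite{MR618549}, with the only adaptations being the periodic framework and the restriction of $(iv)$ to compact sets where $Q$ is nondegenerate. The paper's proof simply cites the specific results (Theorem~1.3, Lemma~2.2, Corollary~3.3, Lemma~2.4, Theorem~8.4 of \cite{MR618549}, together with Theorem~2.2 of \cite{MR2915865}), whereas you have sketched the underlying arguments and explicitly flagged the two places---periodicity and the degeneracy at $\{y = h\}$---where care is needed; substantively there is no difference.
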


\begin{proof}
Since $u_0$ defined in (\ref{u0}) belongs to $\K_{\ga}$ and is such that $\J_h(u_0) < \infty$, the solvability of the minimization problem for $\J_h$ in $\K_{\ga}$ follows from Theorem 1.3 in \cite{MR618549} (see also Theorem 2.2 in \cite{MR2915865}). The proofs for statements $(i)$ through $(iv)$ can also be found in \cite{MR618549}; more precisely, we refer the reader to Lemma 2.2, Corollary 3.3, Lemma 2.4, and Theorem 8.4.
\end{proof}

\begin{rmk}
\label{representative}
In view of property $(i)$, throughout the rest of the paper we work with the precise representative 
\[
u(\bm{x}) = \lim_{r \to 0^+}\fint_{B_r(\bm{x})}u(\bm{y})\,d\bm{y}, \quad \bm{x} \in \Omega.
\]
\end{rmk}

The next result states that for opportune choices of the parameter $\ga$, the minimization problem for $\J_h$ in $\K_\ga$ admits nontrivial solutions, i.e., minimizers which are not of the form $u = u(y)$ and whose free boundaries are not flat.
\begin{thm}[Theorem 1.1 and Corollary 4.3 in \cite{GL18}]
\label{exofgamma}
Given $m,h, \la> 0$, let $\Omega$, $\J_h$, and $\K_\ga$ be defined as in $(\ref{strip})$, $(\ref{Jper})$, and $(\ref{Kg})$, respectively. Let 
\[
h^\# \coloneqq 3\left(\frac{m}{2}\right)^{2/3}, \quad h^* \coloneqq 3\left(\frac{m}{\sqrt{2}}\right)^{2/3},
\]
and, for $h > h^{\#}$, let $t_h$ be the first positive root of the cubic polynomial 
\[
t^3 - ht^2 + m^2 = 0.
\]
Furthermore, for $h \in (h^{\#},h^*)$, let $\tau_h > t_h$ be the unique value such that
\[
\frac{m^2}{t_h} + \frac{h^2 - (h - t_h)^2}{2} = \frac{m^2}{\tau_h} + \frac{h^2 - (h - \min\{h,\tau_h\})^2}{2},
\]
and $\tau_h = t_h = 2h/3$ if $h = h^{\#}$. Let 
\begin{equation}
\label{gammarange}
\left\{
\arraycolsep=1.4pt\def\arraystretch{1.6}
\begin{array}{rll}
\ga \in & (0,\infty) & \text{ if } h < h^{\#}, \\
\ga \in & (0,t_h) \cup (\tau_h,\infty) & \text{ if } h^{\#} \le h < h^*, \\
\ga \in & (0,t_h) & \text{ if } h \ge h^*.
\end{array}
\right.
\end{equation}
Then, for every global minimizer $u \in \K_{\ga}$ of the functional $\J_h$, the following hold:
\begin{itemize}
\item[$(i)$] $u$ is not of the form $u = u(y)$;
\item[$(ii)$] the free boundary $\partial \{u > 0\}$ is not flat, i.e., it does not coincide with a line of the form $\{y = k\}$, for some $k > 0$.
\end{itemize}
\end{thm}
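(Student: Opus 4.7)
The plan is to reduce both $(i)$ and $(ii)$ to a characterization of one-dimensional profiles and profiles with flat free boundary inside $\K_{\ga}$, and then derive a contradiction for $\ga$ in the stated range by comparing energies with the Dirichlet datum $u_0 = m(1-y/\ga)_+$.

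First, I would classify the candidate profiles. Setting $u_t(y) := m(1 - y/t)_+$, I claim that any $u \in \K_{\ga}$ which is a global minimizer of $\J_h$ and is either one-dimensional or admits a flat free boundary at some height $k > 0$ must coincide with $u_t$ for some $t \in \{t_h, t_h'\} \cap (0, \ga]$, where $t_h \leq t_h'$ denote the positive roots of the cubic $P(t) := t^3 - ht^2 + m^2$ (when they exist). For the one-dimensional case this follows from the Dirichlet data, harmonicity of $u$ in $\{u > 0\}$ (\Cref{exofmin}$(iii)$), and the Bernoulli condition in \Cref{exofmin}$(iv)$ applied on the interior free boundary $\{y = t\}$, giving $m/t = \sqrt{h - t}$, so $t < h$ and $P(t) = 0$. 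For the flat-FB case, I would first use the tangential-detachment result of \cite{MR2267752} to exclude $k > \ga$, since a horizontal free boundary cannot meet the vertical Dirichlet segment $\{\pm \la/2\} \times (\ga, \infty)$ tangentially. For $k \leq \ga$, the harmonic problem on the cell $(-\la/2, \la/2) \times (0, k)$ with constant boundary values $m, 0$ on the horizontal sides and periodic data on the lateral sides admits $u(x,y) = m(1 - y/k)$ as its unique solution, reducing to the one-dimensional case.

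Next, I would analyze the scalar energy $E(t) := \J_h(u_t)/\la = m^2/t + [h^2 - (h - \min(h,t))^2]/2$. A direct computation gives $E'(t) = -P(t)/t^2$ on $(0, h]$, so the critical points of $E$ are precisely the positive roots of $P$. Elementary analysis of the cubic shows that $P$ has real positive roots iff $h \geq h^{\#}$; when $h > h^{\#}$, the roots $t_h < 2h/3 < t_h'$ both lie in $(0, h)$, with $t_h$ a local minimum and $t_h'$ a local maximum of $E$. Since $E(t) \to h^2/2$ as $t \to \infty$, the Maxwell point $\tau_h > t_h'$ defined by $E(\tau_h) = E(t_h)$ exists precisely when $E(t_h) > h^2/2$, which reduces (using $m^2 = t_h^2(h - t_h)$) to the condition $t_h > h/3$, equivalently $h < h^*$. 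Moreover $E(\ga) < E(t_h)$ for every $\ga > \tau_h$.

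The conclusion follows by energy comparison. Since $u_0 = u_\ga \in \K_{\ga}$, any global minimizer $u$ satisfies $\J_h(u) \leq \la E(\ga)$. Assume for contradiction that the minimizer is one-dimensional or has flat free boundary; by the classification, $u = u_{t^*}$ for some $t^* \in \{t_h, t_h'\} \cap (0, \ga]$. If $h < h^{\#}$ (any $\ga$) or $h \geq h^{\#}$ with $\ga < t_h$, no cubic root lies in $(0, \ga]$, so no such $t^*$ exists, a contradiction. If $h \in [h^{\#}, h^*)$ and $\ga > \tau_h$, then $t^* \leq \tau_h < \ga$ and
\[
\J_h(u) = \J_h(u_{t^*}) \geq \la E(t_h) = \la E(\tau_h) > \la E(\ga) \geq \J_h(u),
\]
again a contradiction. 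This establishes both $(i)$ and $(ii)$.

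The main technical obstacle lies in the classification step. Excluding flat free boundaries at heights $k > \ga$ invokes the nontrivial tangential-detachment result of \cite{MR2267752}. A second delicate point concerns one-dimensional profiles $u_t$ with $t \geq h$, where \Cref{exofmin}$(iv)$ does not apply directly to the free boundary $\{y = t\}$ (which lies outside the interior region $(-\la/2, \la/2) \times (0, h)$ covered by that statement); such profiles must be ruled out as global minimizers by a separate variational argument exploiting the vanishing of the weight $(h - y)_+$ on $\{y > h\}$ to construct a two-dimensional cutoff competitor with strictly lower gradient energy.
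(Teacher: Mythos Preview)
The present paper does not prove this statement; it is quoted from the companion work \cite{GL18} (as Theorem~1.1 and Corollary~4.3 there) and no argument is reproduced here. There is therefore no proof in this paper against which to compare your attempt.

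As a standalone reconstruction, your sketch is sound in its main lines: parametrize one-dimensional competitors by $u_t(y)=m(1-y/t)_+$ (with $t\le\ga$ forced by the Dirichlet data on $\{\pm\la/2\}\times(\ga,\infty)$), compute the reduced energy $E(t)=\J_h(u_t)/\la$, relate its critical points on $(0,h)$ to the roots of $P(t)=t^3-ht^2+m^2$, and rule out one-dimensional global minimizers via the comparison $\J_h(u)\le\J_h(u_\ga)=\la E(\ga)$. Your analysis of $E$, of the thresholds $h^\#,h^*$, and of the Maxwell point $\tau_h$ is correct, as is the reduction of flat free boundaries at heights $k\le\ga$ to the one-dimensional case by uniqueness for the periodic Dirichlet problem on the slab.

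The gap you flag at the end is genuine and does occur in the range (\ref{gammarange}) whenever $\ga\ge h$ (which is possible for $h<h^*$). Observe, however, that it is narrower than you suggest: for $h\le t<\ga$ one has $E'(t)=-m^2/t^2<0$, so $u_t$ is not even a local minimizer among one-dimensional competitors in $\K_\ga$, hence not a global minimizer of $\J_h$. The only case left uncovered by the Bernoulli-condition argument is therefore $t=\ga$ itself, and excluding $u_\ga$ as a global minimizer when $\ga\ge h$ does require a genuinely two-dimensional competitor, as you indicate.
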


\section{Symmetric minimizers and their free boundaries}
The existence of symmetric minimizers in the sense of \Cref{sm} was previously observed in Theorem 5.10 in \cite{MR2915865} (see also Theorem 1.6 and Remark 5.14 \cite{GL18}). In particular, we recall that for a symmetric minimizer $u \in \K_{\ga}$, the portion of the free boundary $\partial \{u > 0\}$ in $\{-\la/2 < x < 0\}$ can be described by the graph of a function $x = g(y)$, where $g \colon (0,h) \to [-\la/2,0]$ is defined via 
\begin{equation}
\label{gdef}
g(y) \coloneqq \inf \{x \in (-\la/2,0) : u(x,y) > 0\}.
\end{equation}

\begin{prop}
\label{gcont}
Given $m, h, \la > 0$ and $\ga$ as in $(\ref{gammarange})$, let $\Omega$, $\J_h$, and $\K_{\ga}$ be defined as in $(\ref{strip})$, $(\ref{Jper})$, and $(\ref{Kg})$, respectively. Let $u \in \K_{\ga}$ be a symmetric minimizer of $\J_h$ in the sense of \Cref{sm} and let $g$ be defined as above. Then $g$ is a continuous function.
\end{prop}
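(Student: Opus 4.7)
The plan is to prove continuity of $g$ at each $y_0 \in (0,h)$ by separately establishing upper and lower semicontinuity.

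\emph{Upper semicontinuity.} If $g(y_0) = 0$, then $\limsup_{y \to y_0} g(y) \le 0 = g(y_0)$ holds trivially since $g$ takes values in $[-\la/2, 0]$. If $g(y_0) \in [-\la/2, 0)$, fix $\eta > 0$ small enough that $g(y_0) + \eta < 0$. The infimum characterization of $g(y_0)$ and the nondecreasing monotonicity of $u(\cdot, y_0)$ on $(-\la/2, 0)$ from \Cref{sm}(ii) combine to give $u(g(y_0) + \eta, y_0) > 0$. By continuity of $u$ (\Cref{exofmin}(ii)), $u(g(y_0) + \eta, y) > 0$ for all $y$ in a neighborhood of $y_0$, hence $g(y) \le g(y_0) + \eta$ on this neighborhood. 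Letting $\eta \to 0^+$ yields the desired inequality.

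\emph{Lower semicontinuity.} I argue by contradiction: suppose there exist $y_n \to y_0$ in $(0,h)$ and $x^* \in [-\la/2, 0)$ with $g(y_n) \to x^*$ and $x^* < g(y_0)$. Since $(g(y_n), y_n) \in \partial\{u > 0\}$ and the free boundary is closed, $(x^*, y_0) \in \partial\{u > 0\}$. Suppose first that $x^* \in (-\la/2, 0)$: then $(x^*, y_0)$ is an interior free boundary point in a compact subset of $(-\la/2, \la/2) \times (0, h)$, so by \Cref{exofmin}(iv) there exist $\rho > 0$ and an analytic curve $\gamma$ with $\partial\{u > 0\} \cap B_\rho(x^*, y_0) = \gamma$. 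The crucial step is to show that $\gamma$ is transverse to horizontal lines at $(x^*, y_0)$: if instead $\gamma$ had a horizontal tangent, $\gamma$ would meet each sufficiently close horizontal line in two points $x_1 < x_2$ in $(-\la/2, 0)$, and, regardless of which side of $\gamma$ carries $\{u > 0\}$, the resulting local configuration would produce a decrease of $u(\cdot, y)$ from positive to zero at $x_1$ or at $x_2$, contradicting the nondecreasing structure of $u(\cdot, y)$ on $(-\la/2, 0)$. Hence $\gamma$ is transverse, and after shrinking $\rho$ it coincides with the analytic graph $\{(g_{\loc}(y), y) : |y - y_0| < \delta\}$ with $g_{\loc}(y_0) = x^*$. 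By monotonicity, for $y \in (y_0 - \delta, y_0 + \delta)$ we obtain $g(y) = g_{\loc}(y)$, and evaluating at $y_0$ gives $g(y_0) = x^*$, contradicting $g(y_0) > x^*$.

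If instead $x^* = -\la/2$, the $\la$-periodicity of $u$ turns $(-\la/2, y_0)$ into an interior point of the periodic extension of $u$, and \Cref{exofmin}(iv) again supplies a local analytic curve $\gamma$ there. The evenness of $u$ in $x$ together with periodicity forces $\gamma$ to be symmetric about $\{x = -\la/2\}$. The vertical-line option $\gamma = \{x = -\la/2\}$ is incompatible with the Alt--Caffarelli density estimate at free boundary points (since the evenness would make both sides carry identical values of $u$, eliminating any $\{u = 0\}$ density at $(-\la/2, y_0)$), so $\gamma$ must be a symmetric non-vertical analytic curve with horizontal tangent at $(-\la/2, y_0)$. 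Restricting to the half $(-\la/2, 0)$ and reapplying the same monotonicity analysis then shows that $g_{\loc}(y) \to -\la/2$ as $y \to y_0$, whence $g(y_0) = -\la/2$, again contradicting $g(y_0) > x^* = -\la/2$. The main obstacle in the argument is precisely the exclusion of these horizontal-tangent configurations of the local free boundary: this relies decisively on the strict $x$-monotonicity inherited from the Steiner symmetrization to rule out the parabolic shapes a tangent direction would otherwise allow, both in the interior case and in the periodic-boundary case after restriction to one half of the symmetric curve.
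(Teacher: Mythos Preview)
Your approach differs substantially from the paper's. The paper argues in three steps: first, any horizontal segment in $\partial\{u>0\}$ extends by analytic continuation to the full line $\{y=k\}$, contradicting the non-flatness conclusion of \Cref{exofgamma} for the stated range of $\gamma$; second, one-sided limits of $g$ exist, because otherwise the free boundary would have infinite length inside a ball where it is a smooth graph; third, jumps (which would be horizontal segments) and removable discontinuities (via the Alt--Caffarelli density estimate, Corollary 3.6 in \cite{MR618549}) are excluded. Your semicontinuity route is more direct and, notably, does not appeal to \Cref{exofgamma}.

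There is, however, a genuine gap in your lower-semicontinuity step. The assertion ``if $\gamma$ had a horizontal tangent, $\gamma$ would meet each sufficiently close horizontal line in two points'' is false for analytic curves in general: if the contact is of odd order (e.g.\ $y-y_0=(x-x^*)^3$) each nearby horizontal line meets $\gamma$ exactly once and $\gamma$ is in fact still a $y$-graph, while if $\gamma$ is locally a horizontal segment then nearby horizontal lines miss $\gamma$ entirely. The odd-order case is harmless for your conclusion (you may pass directly to $g_{\loc}$), but the horizontal-segment case is not covered by your two-point monotonicity dichotomy, and this is exactly the configuration that drives the paper to invoke \Cref{exofgamma}. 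You can repair the argument without non-flatness by exploiting the specific sequence $g(y_n)\to x^*$: if $\gamma$ were the segment $\{y=y_0,\ |x-x^*|<\rho\}$ with $\{u>0\}$ on one side, then for $y_n$ on the $\{u=0\}$ side monotonicity forces $g(y_n)\ge x^*+\rho$, while for $y_n$ on the $\{u>0\}$ side it forces $g(y_n)\le x^*-\rho/2$, either way contradicting $g(y_n)\to x^*$. The same case distinction is needed in your $x^*=-\la/2$ analysis, where ``reapplying the same monotonicity analysis'' is, as written, too vague to carry the conclusion.
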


\begin{proof} We divide the proof into several steps.
\newline
\textbf{Step 1:} We begin by showing that if $\partial \{u > 0\}$ contains the line segment $S$ of endpoints $(\ell,k), (L,k)$, with $\ell < L$ and $k < h$, then $\partial \{u > 0\} = \{y = k\}$, i.e., the free boundary of $u$ coincides with a line segment in $\Omega$, a contradiction to \Cref{exofgamma} $(ii)$. Without loss of generality, we can assume that $S$ is maximal, i.e., for every line segment $S'$ such that $S \subset S'$ and $S' \subset \partial\{u > 0\}$, it must be that $S = S'$. If $\ell = -\la/2$ and $L = \la/2$ there is nothing to do. Then assume without loss that $L < \la/2$. Since $k < h$, by \Cref{exofmin} $(iv)$ we can find a number $\rho > 0$, an analytic function $f$, and a set of local coordinates such that the free boundary $\partial \{u > 0\}$ coincides with the graph of $f$ in $B_{\rho}((L,k))$ in the local coordinates. In turn, $f$ agrees with an affine function on a subinterval of its domain, and so by analyticity it must be equal to the same affine function on its whole domain; this contradicts the maximality of $S$. 
\newline
\textbf{Step 2:} Next, we show that both one-sided limits 
\[
\lim_{y \to \bar{y}^+}g(y) \quad \text{ and } \quad \lim_{y \to \bar{y}^-}g(y) 
\]
exist for every $\bar{y} \in (0,h)$. To see this, suppose that
\[
L \coloneqq \limsup_{y \to \bar{y}^+}g(y) > \liminf_{y \to \bar{y}^+}g(y) \eqqcolon \ell;
\] 
then we can find two sequences $\{y_n\}_n, \{z_n\}_n$ such that $y_n \searrow \bar{y}$, $z_n \in (y_{n + 1}, y_n)$ and
\[
\lim_{n \to \infty}g(y_n) = L, \qquad \lim_{n \to \infty}g(z_n) = \ell.
\]
Let $\bm{y} \coloneqq (L, \bar{y})$. We claim that $\bm{y} \in \partial \{u > 0\}$. To prove the claim, first observe that there exists a $\delta > 0$ such that $B_{\delta}(\bm{y}) \subset \Omega$, and notice that $u(\bm{y}) = 0$ since $u$ is continuous in $\Omega$ and by assumption $u(g(y_n),y_n) = 0$ for every $n \in \NN$. Given $\eta > 0$, if $n \in \NN$ is large enough then $(g(y_n), y_n) \in B_{\eta}(\bm{y})$ and since by assumption $(g(y_n), y_n) \in \partial \{u > 0\}$ then there exists $\bm{x}_n \in B_{\eta}((g(y_n), y_n))$ such that $u(\bm{x}_n) > 0$. This shows that $\bm{y} \in \partial \{u > 0\}$. Again by \Cref{exofmin}, there exists $\rho > 0$ such that $B_{\rho} \cap \partial \{u > 0\}$ is the graph of a smooth function (in an opportunely defined set of coordinates centered at the point $\bm{y}$). Denote by $\kappa$ the Lipschitz constant of this function in $(-\rho,\rho)$ and notice that the length of $\partial \{u > 0\}$ in $B_{\rho}(\bm{y})$ cannot exceed $2\rho\sqrt{1 + \kappa^2}$.  On the other hand, we observe that for $n$ large enough one also has that $(g(y_n), y_n) \in B_{\rho/2}(\bm{y})$ and $(g(z_n), z_n) \notin B_{\rho}(\bm{y})$, thus showing that the length of $\partial \{u > 0\}$ cannot be finite. We have therefore reached a contradiction. The proof in the other case is similar and therefore we omit the details.
\newline
\textbf{Step 3:} The previous step shows that $g$ cannot have essential discontinuities. To exclude jump discontinuities it is enough to notice that these would correspond to horizontal line segments in the free boundary of $u$, a behavior that is ruled out in the first step. Finally, in view of Corollary 3.6 in \cite{MR618549}, we see that removable discontinuities are also not possible. This concludes the proof.
\end{proof}

\section{Proof of \Cref{bgl}}
\begin{proof}[Proof of \Cref{bgl}]
It is enough to show that there exists a constant $C$ such that for every $\mu > 0$ sufficiently small (with respect to $\la, h, \ga$ and $h - \ga$) 
\begin{equation}
\label{bglwts}
|\nabla u(\bm{y})| \le C
\end{equation}
for $\bm{y} \in \Omega \cap B_{2\mu}(\bm{x}_0) \setminus B_{\mu}(\bm{x}_0)$. For $\bm{x} \in B_8(\bm{0})$ and $\mu$ small enough, let $w$ be the rescaled function
\[
w(\bm{x}) \coloneqq \frac{u(\bm{x}_0 + \mu \bm{x})}{\mu}.
\]
Then $w$ is harmonic in $\{w > 0\}$ and for $\bm{x} = (x,y) \in \partial \{w > 0\} \cap \Omega$, by \Cref{exofmin} $(iv)$ we have that
\begin{equation}
\label{bernw}
\partial_{\nu}w(\bm{x}) = \partial_{\nu}u(\bm{x}_0 + \mu\bm{x}) = \sqrt{h - \ga - \mu y},
\end{equation}
where $\nu$ is the interior unit normal vector to $\{w > 0\}$ at $\bm{x}$. Clearly, to prove (\ref{bglwts}) is enough to show that
\begin{equation}
\label{bglwts2}
|\nabla w(\bm{x})| \le C
\end{equation}
for $\bm{x} \in \{w > 0\} \cap B_2^+(\bm{0}) \setminus B_1^+(\bm{0})$, where $B_r^+(\bm{0}) \coloneqq B_r(\bm{0}) \cap \{x > 0\}$. For $\bm{x} \in B_8(\bm{0})$ we define
\[
d(\bm{x}) \coloneqq \dist(\bm{x}, \partial \{w > 0\}), \quad D(\bm{x}) \coloneqq \dist(\bm{x}, \{(0,y) : y \ge 0\}).
\]
The proof of (\ref{bglwts2}) is divided into several steps. 
\newline
\textbf{Step 1:} In this first step we show that in order to obtain (\ref{bglwts2}), it is enough to prove that for every $\bm{x} \in B_2^+(\bm{0}) \setminus B_1^+(\bm{0})$ either 
\begin{equation}
\label{interiorcase}
w(\bm{x}) \le c\min\{d(\bm{x}), D(\bm{x})\} 
\end{equation}
or there exists $\rho > 0$ such that $\bm{x} = (x,y) \in B_{\rho/2}^+(0,y)$ and for every $\bm{y} \in B_{\rho}^+(0,y)$
\begin{equation}
\label{boundarycase}
w(\bm{y}) \le c \rho.
\end{equation}
Indeed, assume that $\bm{x} \in B_2^+(\bm{0}) \setminus B_1^+(\bm{0})$ is such that (\ref{interiorcase}) is satisfied. Then, if let $\delta(\bm{x}) \coloneqq \min\{d(\bm{x}), D(\bm{x})\}$, we have that $w$ is harmonic in $B_{\delta(\bm{x})}(\bm{x})$ and 
\[
|\nabla w(\bm{x})| \le \sup\left\{|\nabla w(\bm{y})| : \bm{y} \in B_{\delta(\bm{x})/2}(\bm{x})\right\} \le \frac{4}{\delta(\bm{x})}\sup\left\{w(\bm{y}) : \bm{y} \in B_{\delta(\bm{x})}(\bm{x})\right\} \le 4c,
\]
where the second inequality follows from standard interior gradient estimates (see Theorem 2.10 in \cite{gilbargtrudinger}). Similarly, for every $\bm{x} = (x,y) \in B_2^+(\bm{0}) \setminus B_1^+(\bm{0})$ such that (\ref{boundarycase}) holds, we see that 
\[
|\nabla w(\bm{x})| \le \sup\left\{|\nabla w(\bm{y})| : \bm{y} \in B_{\rho/2}^+(0,y)\right\} \le \frac{K}{\rho}\sup\left\{w(\bm{y}) : \bm{y} \in  B_{\rho}^+(0,y)\right\} \le Kc,
\]
where in the second inequality we have used Theorem 4.11 in \cite{gilbargtrudinger}.
\newline
\textbf{Step 2:} Let $c_0 > 3 \sqrt{h} \log 2 $. We claim that for every $\bm{x} \in B_4^+(\bm{0})$ for which $d(\bm{x}) < D(\bm{x})$ then
\begin{equation}
\label{unifbound}
w(\bm{x}) \le c_0 d(\bm{x}).
\end{equation}
Notice that if $w(\bm{x}) = 0$ then there is nothing to do, therefore, we assume without loss of generality that $w(\bm{x}) > 0$. Since $B_{d(\bm{x})}(\bm{x}) \subset \{w > 0\}$ we have that $w$ is harmonic in $B_{d(\bm{x})}(\bm{x})$ and by definition there must be $\bar{\bm{x}} \in \partial B_{d(\bm{x})}(\bm{x}) \cap \partial \{w > 0\}$. Suppose that 
\[
w(\bm{x}) > c_0 d(\bm{x}).
\]
Then, by Harnack's inequality (see Exercise 2.6 in \cite{gilbargtrudinger}), 
\[
w(\bm{y}) \ge \frac{w(\bm{x})}{3} > \frac{c_0d(\bm{x})}{3}
\]
for every $\bm{y} \in B_{d(\bm{x})/2}(\bm{x})$. Let $v$ be the harmonic function in the annulus $B_{d(\bm{x})}(\bm{x}) \setminus B_{d(\bm{x})/2}(\bm{x})$ which satisfies the boundary conditions
\[
\left\{
\arraycolsep=1.4pt\def\arraystretch{1.6}
\begin{array}{ll}
\displaystyle v = \frac{c_0d(\bm{x})}{3} & \text{ on } \partial B_{d(\bm{x})/2}(\bm{x}), \\
\displaystyle v = 0 & \text{ on } \partial B_{d(\bm{x})}(\bm{x}).
\end{array}
\right.
\]
Writing $v$ in polar coordinates centered at $\bm{x}$, $v$ must be the radial function 
\[
r \mapsto \frac{c_0d(\bm{x})}{3 \log 2} \log \left(\frac{d(\bm{x})}{r}\right)
\]
By the maximum principle for harmonic functions, $v \le w$ in the annulus, and since equality holds at the point $\bar{\bm{x}}$, it follows that
\[
\frac{c_0}{3\log 2} = \partial_{\nu}v(\bar{\bm{x}}) \le \partial_{\nu}w(\bar{\bm{x}}) \le \sqrt{h},
\]
where in the last inequality we have used (\ref{bernw}). In turn,
\[
c_0 \le 3 \sqrt{h} \log 2 , 
\]
which is in contradiction with our choice of $c_0$.
\newline
\textbf{Step 3: } Let 
\[
U_0 \coloneqq \left\{\bm{x} = (x,0) : 1 < x < 4 \right\}.
\]
In this step we show that there exists a constant $c_1 \ge c_0$, independent of $\mu$, such that 
\begin{equation}
\label{unifD}
w(\bm{x}) \le c_1D(\bm{x})
\end{equation}
for every $\bm{x} \in U_0 \cap \{w > 0\}$ with $1 \le D(\bm{x}) \le d(\bm{x})$. In view of (\ref{x0acc}), we can find a point $\bm{z}_0 = (s_0,t_0)$ on $\partial \{w > 0\} \cap B_{1/4}^+(\bm{0})$. Then, for every $s$ such that $(s,t_0) \in B_4^+(\bm{0}) \setminus B_1^+(\bm{0})$, we must have that $d(s,t_0) < D(s,t_0)$. Consequently, 
\[
w(s,t_0) \le c_0d(s,t_0),
\]
where $c_0$ is the constant given in the previous step. Notice that by assumption $d(\bm{x}) \ge D(\bm{x}) \ge 1$, and therefore $B_{1/2}(\bm{x}) \subset \{w > 0\}$. Moreover, the ball $B_{1/4}(\bm{x})$ contains the point $(x,t_0)$ and Harnack's inequality then yields
\begin{equation}
\label{w3w}
w(\bm{x}) \le 3w(x,t_0).
\end{equation}
On the other hand,
\begin{equation}
\label{wt0}
w(x,t_0) \le c_0d(x,t_0) < c_0D(x,t_0) \le c_0\sqrt{x^2 + t_0^2} \le \frac{\sqrt{17}c_0x}{4} = \frac{\sqrt{17}c_0D(\bm{x})}{4},
\end{equation}
where in the last inequality we have used the fact that $|t_0| \le 1/4 \le x/4$. The desired inequality (\ref{unifD}) follows directly from (\ref{w3w}) and (\ref{wt0}).
\newline
\textbf{Step 4:}
The purpose of this step is to show that (\ref{unifD}) holds, possibly with a larger constant, at every point $\bm{x} \in B_4^+(\bm{0}) \setminus B_1^+(\bm{0})$, such that $y < 0$ and $D(\bm{x}) \le d(\bm{x})$. We begin by considering the case 
\[
\bm{x} \in U_1 \coloneqq \left\{\bm{z} = (s,t) \in B_4^+(\bm{0}) \setminus B_1^+(\bm{0}) : t < 0 \text{ and } \dist(\bm{z}, U_0) < \frac{1}{4}\right\}.
\]
Reasoning as in the previous step, we see that since $d(\bm{x}) \ge D(\bm{x}) \ge 1$ we are in a position to apply Harnack's inequality in $B_{1/4}(\bm{x}) \subset B_{1/2}(\bm{x}) \subset \{w > 0\}$ to conclude that
\[
w(\bm{x}) \le 3w(\bm{z}_1)
\]
for every $\bm{z}_1 \in U_0$ such that $|\bm{x} - \bm{z}_1| < 1/4$. Additionally, it follows from steps two and three that
\[
w(\bm{z}_1) \le c_1 \min\{d(\bm{z}_1),D(\bm{z}_1)\} \le c_1D(\bm{z}_1) \le 4c_1 \le 4c_1D(\bm{x}).
\]
Define the sets $U_i$, $i \ge 2$, recursively via
\[
U_i \coloneqq \left\{\bm{z} = (s,t) \in \left (B_4^+(\bm{0}) \setminus B_1^+(\bm{0}) \right) \setminus {\textstyle\bigcup_{j = 1}^{i - 1}}U_j : t < 0 \text{ and } \dist(\bm{z}, U_{i - 1}) < \frac{1}{4}\right\},
\]
and notice that by simple geometric considerations
\[
\left(B_4^+(\bm{0}) \setminus B_1^+(\bm{0})\right) \cap \{t \le 0\} = \bigcup_{i = 0}^{16}U_i.
\]
In particular, if $\bm{x} \in U_i$ is such that $D(\bm{x}) \le d(\bm{x})$ then an iteration of the argument above yields
\[
w(\bm{x}) \le 12^ic_1D(\bm{x}).
\]
\textbf{Step 5:} We are left to consider the case where $\bm{x} = (x,y) \in B_2^+(\bm{0}) \setminus B_1^+(\bm{0})$ is such that $y > 0$, $\bm{x} \in \{w > 0\}$, and $D(\bm{x}) \le d(\bm{x})$. Suppose that there exists a sequence $\{x_n\}_{n \in \NN} \subset B_4^+(\bm{0}) \setminus B_1^+(\bm{0})$ such that $\bm{x}_n \to \bm{x}$ and such that $d(\bm{x}_n) < D(\bm{x}_n)$ for every $n$. Then necessarily $d(\bm{x}) = D(\bm{x})$ and by (\ref{unifbound})
\begin{equation}
\label{wlim}
w(\bm{x}) = \lim_{n \to \infty}w(\bm{x}_n) \le \lim_{n \to \infty}cd(\bm{x}_n) = cD(\bm{x}).
\end{equation}
Hence, we can assume that such a sequence does not exist. Then there is $0 < \delta < y$ such that for every $t \in (y - \delta, y + \delta)$ the point $(x,t)$ is such that $D(x,t) \le d(x,t)$, and in particular $w(s,t) > 0$ for every $0 < s < x$. We define
\begin{align*}
a &\ \coloneqq \inf \left\{t \le y : \text{ for every } t < \bar{t} < y + \delta, w(s,\bar{t}) > 0 \text{ for every } s \text{ small} \right\},\\
b &\ \coloneqq \sup \left\{t \ge y : \text{ for every } y - \delta < \bar{t} < t, w(s,\bar{t}) > 0 \text{ for every } s \text{ small}\right\}.
\end{align*}
Notice that by (\ref{x0acc}), $a \ge 0$. Moreover, $y \in (a,b)$, and it follows from the definition that if $b < \infty$, every point of the form $(s,a)$ and $(s,b)$, $s > 0$, is the limit of a sequence of points $\{\bm{x}_n\}_{n \in \NN}$ with the property that $d(\bm{x}_n) < D(\bm{x}_n)$. In turn, (\ref{unifbound}) and (\ref{wlim}) imply that 
\begin{equation}
\label{wsa}
w(s,a) \le cs, \quad w(s,b) \le cs,
\end{equation}
for every $s > 0$ such that the points $(s,a), (s,b) \in B_4^+(\bm{0})$. Assume first that $y - a < b - y$ and fix $\e > 0$ small enough so that 
\begin{equation}
\label{pi4-e}
1 - \tan \theta \le \frac{1}{4}, \quad \theta \coloneqq \frac{\pi}{4} - \e.
\end{equation}
\emph{Case 1}: Assume that $y - a \le x \tan \theta$. Let $\bar{\bm{x}} = (x,a)$ and notice that 
\[
|\bm{x} - \bar{\bm{x}}| = y - a \le x\tan \theta < x = D(\bm{x}).
\]
Since by assumption $D(\bm{x}) \le d(\bm{x})$ we have that $B_{x \tan \theta}(\bm{x}) \subset B_{D(\bm{x})}(\bm{x}) \subset \{w > 0\}$ and by Harnack's inequality we can find a constant $c = c(\e)$ such that
\[
w(\bm{x}) \le cw(\bar{\bm{x}}) \le cx = cD(\bm{x}),
\]
where in the last inequality we have used (\ref{wsa}).
\newline
\emph{Case 2}: Assume that $x \tan \theta < y - a \le x$ and let $\widehat{\bm{x}} = (x, a + x \tan \theta)$. By (\ref{pi4-e}) we see that 
\[
|\bm{x} - \widehat{\bm{x}}| \le x\left(1 - \tan \theta \right) \le \frac{x}{4}.
\]
In turn, $B_{x/2}(\bm{x}) \subset \{w > 0\}$, and similarly to above, by Harnack's inequality,
\begin{equation}
\label{y=a+x}
w(\bm{x}) \le 3w(\widehat{\bm{x}}) \le cD(\bm{x}),
\end{equation}
where in the last inequality follows from the fact that $\widehat{\bm{x}}$ satisfies the conditions of Case 1.
\newline
\emph{Case 3:} Assume that
\[
\frac{3}{4}(y - a) \le x < y - a.
\]
Since $y - a < b - y$ it follows that $B_{\frac{1}{2}(y - a)}(\bm{x}) \subset \{w > 0\}$, and therefore
\[
w(\bm{x}) \le 3w(y - a, y) \le c(y - a) \le \frac{4}{3}cx = \frac{4}{3}cD(\bm{x}),
\]
where in the second inequality we have used the fact that the point $(y - a, y)$ satisfies the conditions of Case 2.
\newline
\emph{Case 4}: Assume that
\[
\frac{1}{2}(y - a) \le x < \frac{3}{4}(y - a).
\]
Then $(3(y - a)/4, y)$ satisfies of the conditions of Case 3, and so, reasoning as above, we obtain that
\[
w(\bm{x}) \le cw(3(y - a)/4, y-a) \le c(y - a) \le 2cx = 2cD(\bm{x}).
\]
\emph{Case 5:} Finally, assume that $x < (y - a)/2$. Notice that $B_{y - a}^+(0,y) \subset \{w > 0\}$ by the non-decreasing property of symmetric minimizers. Then, for every $\bm{y} \in B_{(y - a)/2}^+(0,y)$, by the boundary Harnack principle (see Theorem 11.5 in \cite{MR2145284}) we have that
\[
w(\bm{y}) \le Mw(y - a, y) \le Mc(y - a),
\]
where in the last inequality we used (\ref{y=a+x}). If $y - a > b - y$ then $4 \ge 2y - a > b$ and therefore we can repeat the same argument as above. This concludes the proof.
\end{proof}

\section{Blow-up limits}
\label{busec}
Given a local minimizer $u \in \K_{\ga}$ of the functional $\J_h$, consider a sequence $\rho_n \to 0^+$, a real number $R > 0$, and for every $n \in \NN$ sufficiently large define the rescaled functions
\begin{equation}
\label{blowup}
u_{n}(\bm{z}) \coloneqq \frac{u(\bm{x}_0 + \rho_n \bm{z})}{\rho_n},
\end{equation}
where $\bm{z} \in B_R(\bm{0})$, and $\bm{x}_0 = (-\la/2,\ga)$. Notice that if $\nabla u$ is bounded in a neighborhood of $\bm{x}_0$ (a condition that is guaranteed under the assumptions of \Cref{bgl}), then for every $n$ large enough
\[
|\nabla u_n(\bm{z})| = |\nabla u(\bm{x}_0 + \rho_n \bm{z})| \le C,
\]
where $C$ is a positive constant independent of $n$ and $\bm{z}$. Since $u_n(\bm{0}) = 0$ for every $n \in \NN$, it follows that there exist a subsequence (which we don't relabel) and a function $w \in W^{1,\infty}_{\loc}(\RR^2)$ such that for every $R > 0$,
\begin{equation}
\begin{aligned}
\label{blowupconv}
u_n \to &\, w \quad \ \ \, \text{ in } C^{0,\alpha}(B_R(\bm{0})) \text{ for all } 0 < \alpha < 1, \\
\nabla u_n \overset{\ast}\rightharpoonup &\, \nabla w \quad \text{ in } L^{\infty}(B_R(\bm{0});\RR^2). 
\end{aligned}
\end{equation}
The function $w$ is called a \emph{blow-up limit} of $u$ at $\bm{x}_0$ with respect to the sequence $\{\rho_n\}_n$. 

\subsection{Non-degeneracy properties of blow-up limits}
\begin{prop}
\label{competition2}
Given $m, h, \ga, \la > 0$ and $k \in (0,1)$, there exists a positive constant $C_{\min}(k)$ such that for every (local) minimizer $u$ of $\J_h$ in $\K_{\ga}$ and for every (small) ball $B_r(\bm{x}) \subset \Omega$, $\bm{x} = (x,y)$, if 
\[
\frac{1}{r} \fint_{\partial B_r(\bm{x})}u\, d\mathcal{H}^{1} \le C_{\min}(k)\sqrt{(h - y - kr)_+},
\]
then $u \equiv 0$ in $B_{kr}(\bm{x})$. Moreover, if $0 < r < \la$, the result is still valid for balls not contained in $\Omega$, provided $B_r(\bm{x}) \cap \{(\pm \la/2, \ga)\} = \emptyset$.
\end{prop}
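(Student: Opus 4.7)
The approach adapts the classical Alt-Caffarelli non-degeneracy argument (cf.\ Lemma~3.4 of \cite{MR618549}) to our $y$-dependent Bernoulli coefficient $Q^2(\bm z) = (h - z_2)_+$. The plan is to build a competitor vanishing on $B_{kr}(\bm x)$, obtain an energy inequality from local minimality, and close the estimate via the Alt-Caffarelli lower density bound for the positivity set.

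Specifically, the competitor will be $v \coloneqq (u - \psi)_+$ inside $B_r(\bm x)$ and $v \coloneqq u$ outside, where
\[
\psi(\bm z) \coloneqq \alpha \,\frac{\log(r/|\bm z - \bm x|)}{\log(1/k)}
\]
is the explicit radial harmonic function vanishing on $\partial B_r(\bm x)$ and equal to $\alpha$ on $\partial B_{kr}(\bm x)$. The amplitude will be chosen of the form $\alpha = \kappa(k) \fint_{\partial B_r(\bm x)} u$, with $\kappa(k)$ dictated by Harnack's inequality applied to the harmonic majorant of $u$ in $B_r(\bm x)$; this forces $u \le \psi$ throughout $B_{kr}(\bm x)$, so $v \equiv 0$ there. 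The hypothesis $B_r(\bm x) \subset \Omega$ secures admissibility ($v$ agrees with $u$ on $\Gamma_\ga$ and remains $\la$-periodic), and taking $r$ small enough places $v$ in the $\e_0$-neighborhood required by \Cref{sm}.

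The decisive estimate is the energy comparison. Exploiting the harmonicity of $\psi$, an integration by parts on $\{u > \psi\} \cap B_r(\bm x)$, and the identity $\partial_\nu \psi = -\alpha/(r\log(1/k))$ on $\partial B_r(\bm x)$, I would obtain
\[
\J_h(v) - \J_h(u) \,\le\, \frac{4\pi \alpha \, r}{\log(1/k)}\fint_{\partial B_r(\bm x)} u \;-\; (h - y - kr)_+ \bigl|\{u > 0\} \cap B_{kr}(\bm x)\bigr|,
\]
after invoking $(h - z_2)_+ \ge (h - y - kr)_+$ on $B_{kr}(\bm x)$ together with $\{v > 0\} \cap B_{kr}(\bm x) = \emptyset$. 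Local minimality $\J_h(u) \le \J_h(v)$ and the hypothesis then yield
\[
\bigl|\{u > 0\} \cap B_{kr}(\bm x)\bigr| \,\le\, C(k)\, C_{\min}^2(k)\, r^2,
\]
which, provided $C_{\min}(k)$ is chosen small enough in terms of $k$, contradicts the Alt-Caffarelli lower density bound at any point of $\{u > 0\} \cap B_{kr/2}(\bm x)$; this forces $u \equiv 0$ on $B_{kr}(\bm x)$.

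For the ``moreover'' clause, when $B_r(\bm x) \not\subset \Omega$ but $r < \la$ and $B_r(\bm x) \cap \{(\pm\la/2, \ga)\} = \emptyset$, I would use the $\la$-periodicity of $u$ to translate the ball by an integer multiple of $\la$ into a fundamental domain of $\Omega$, reducing to the main case; the exclusion of the corners is precisely what permits such a translation to avoid the locus where the boundary condition changes type. The principal technical obstacle is the calibration of $C_{\min}(k)$: the Harnack constant $\kappa(k)$, the radial-barrier factor $\log(1/k)$, and the Alt-Caffarelli density constant must all be tracked together in order for the contradiction to close, with the expected degeneration as $k \to 1$.
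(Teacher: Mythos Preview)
The paper does not give its own argument here; it simply refers the reader to Lemma~3.4 and Remark~3.5 of \cite{MR618549} (and to Theorem~3.6 and Remark~5.2 of \cite{MR2915865}). Your plan to adapt the Alt--Caffarelli competitor argument with $v=(u-\psi)_+$ and a logarithmic barrier is therefore exactly in line with what the paper intends. Two points in the sketch, however, do not stand as written.

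The closing step is circular. You pass from $|\{u>0\}\cap B_{kr}(\bm x)|\le C(k)\,C_{\min}^2(k)\,r^2$ to $u\equiv0$ by invoking the lower density bound for $\{u>0\}$; but in \cite{MR618549} that density estimate is itself a corollary of Lemma~3.4 (its contrapositive, combined with the Lipschitz bound), so you are appealing to a consequence of the very statement you are proving. The original argument closes directly from the energy comparison without this detour. As a smaller matter, the boundary term in your displayed inequality should read $\frac{4\pi\alpha}{\log(1/k)}\fint_{\partial B_r(\bm x)}u$; the extra factor of $r$ you wrote spoils the scaling and would prevent choosing $C_{\min}(k)$ independently of $r$.

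Your handling of the ``moreover'' clause is also incorrect. If $B_r(\bm x)$ straddles a lateral side $\{x=\pm\la/2\}$, no integer multiple of $\la$ translates it into the strip $\Omega=(-\la/2,\la/2)\times(0,\infty)$. The right point is that $u\in H^1_{\per}(\Omega)$ is already defined on all of $\RR^2_+$: across the lateral boundary below $\{y=\ga\}$ the periodic extension is a local minimizer and the interior argument applies verbatim, while across the Dirichlet segment $\{x=\pm\la/2,\ y>\ga\}$, where $u=0$, one uses odd reflection. The hypothesis $B_r(\bm x)\cap\{(\pm\la/2,\ga)\}=\emptyset$ is needed precisely because neither extension is available through those two points.
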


For a proof of \Cref{competition2} we refer to Lemma 3.4 and Remark 3.5 in \cite{MR618549}; see also Theorem 3.6 and Remark 5.2 in \cite{MR2915865}. 

\begin{lem}
\label{nondeg}
Given $m, h, \la > 0$ and $\ga < h$, let $u \in \K_{\ga}$ be a local minimizer of $\J_h$ and let $w$ be a blow-up limit of $u$ at $\bm{x}_0 =  (-\la/2,\ga)$ with respect to the sequence $\{\rho_n\}_n$. Furthermore, assume that there exist a constant $\beta \ge 1$ and a sequence of points $\bm{x}_n \in \partial \{u > 0\} \cap \Omega$ such that 
\begin{equation}
\label{rbr}
\rho_n \le |\bm{x}_n - \bm{x}_0| \le \beta \rho_n
\end{equation}
for every $n$ large enough. Then $w$ is not identically equal to zero.
\end{lem}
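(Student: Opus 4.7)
The plan is to argue by contrapositive via the non-degeneracy property, showing that the free boundary points $\bm{x}_n$ force the blow-up limit $w$ to be nontrivial in a neighborhood of a definite point in the plane.

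\textbf{Setup.} Define the rescaled points $\bm{z}_n \coloneqq (\bm{x}_n - \bm{x}_0)/\rho_n$. Assumption \eqref{rbr} gives $1 \le |\bm{z}_n| \le \beta$, so up to extraction of a (not relabeled) subsequence we may assume $\bm{z}_n \to \bm{z}_\infty$ with $|\bm{z}_\infty| \in [1,\beta]$. In the rescaled coordinates, $\bm{z}_n$ belongs to $\partial\{u_n > 0\}$ because $u(\bm{x}_n) = 0$ and there are points where $u > 0$ arbitrarily close to $\bm{x}_n$. The goal is to apply \Cref{competition2} to $u$ on the ball $B_{\rho_n/2}(\bm{x}_n)$ and then rescale to get a positive lower bound on $w$ somewhere near $\bm{z}_\infty$.

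\textbf{Non-degeneracy estimate.} Fix $k \in (0,1)$, say $k = 1/2$. Since $|\bm{x}_n - \bm{x}_0| \ge \rho_n > \rho_n/2$, the ball $B_{\rho_n/2}(\bm{x}_n)$ does not contain $\bm{x}_0 = (-\la/2,\ga)$, and for all $n$ sufficiently large it is also far from the other corner $(\la/2,\ga)$; moreover $\rho_n/2 < \la$. Thus the hypotheses of the last sentence of \Cref{competition2} are satisfied. Since $\bm{x}_n \in \partial\{u>0\}$, $u$ is not identically zero on $B_{k\rho_n/2}(\bm{x}_n)$, so the contrapositive of \Cref{competition2} yields
\[
\frac{2}{\rho_n}\fint_{\partial B_{\rho_n/2}(\bm{x}_n)} u\,d\mathcal{H}^1 \; > \; C_{\min}(k)\sqrt{(h - y_n - k\rho_n/2)_+},
\]
where $\bm{x}_n = (x_n,y_n)$ and $y_n \to \ga < h$ as $n \to \infty$.

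\textbf{Rescaling and passage to the limit.} The change of variables $\bm{x} = \bm{x}_0 + \rho_n \bm{z}$ together with the definition \eqref{blowup} of $u_n$ transforms the left-hand side above into $2\fint_{\partial B_{1/2}(\bm{z}_n)} u_n\,d\mathcal{H}^1$. By the uniform convergence $u_n \to w$ on compact subsets of $\RR^2$ provided by \eqref{blowupconv}, together with $\bm{z}_n \to \bm{z}_\infty$, one can pass to the limit in the spherical average and obtain
\[
2\fint_{\partial B_{1/2}(\bm{z}_\infty)} w\,d\mathcal{H}^1 \; \ge \; C_{\min}(k)\sqrt{h - \ga} \; > \; 0.
\]
In particular $w$ cannot vanish identically on $\partial B_{1/2}(\bm{z}_\infty)$, so $w \not\equiv 0$.

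\textbf{Anticipated obstacle.} The only subtle point is verifying that \Cref{competition2} is applicable even though $B_{\rho_n/2}(\bm{x}_n)$ need not be contained in $\Omega$ (since $\bm{x}_n$ is close to the fixed boundary near $\bm{x}_0$). This is handled by the final sentence of \Cref{competition2}, which permits balls not contained in $\Omega$ provided they avoid the corner points $(\pm\la/2,\ga)$; the choice $r = \rho_n/2 < |\bm{x}_n - \bm{x}_0|$ is precisely what ensures this, and it is the reason for working at scale $\rho_n/2$ rather than at a larger scale comparable to $|\bm{x}_n - \bm{x}_0|$.
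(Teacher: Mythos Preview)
Your argument is correct and follows essentially the same route as the paper: both apply the contrapositive of \Cref{competition2} at the free boundary points $\bm{x}_n$, rescale, and pass to the limit using the locally uniform convergence $u_n \to w$. The only cosmetic differences are that the paper works with radius $\rho_n r_n = |\bm{x}_n - \bm{x}_0|$ (so that $\bm{x}_0$ lies exactly on the boundary circle) and passes to the limit via a sup point rather than the spherical average, whereas your choice of radius $\rho_n/2$ keeps the ball strictly away from the corner and lets you take the limit of the average directly.
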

\begin{proof}
By assumption, there exists a sequence of radii $\{r_n\}_n$, $1 \le r_n \le \beta$, such that 
\[
\partial B_{\rho_nr_n}(\bm{x}_0) \cap \partial \{u > 0\} \cap \Omega \neq \emptyset.
\]
Thus, for every $n \in \NN$ sufficiently large,
\[
\bm{z}_n \coloneqq \frac{\bm{x}_n - \bm{x}_0}{\rho_n} \in \partial B_{r_n}(\bm{0}) \cap \partial \{u_n > 0\} \cap \{s > 0\},
\]
and furthermore we can assume that  $h - \ga - 2\beta \rho_n > 0$. Given $k \in (0,1)$, for every such $n$, consider the ball $B_{r_n}(\bm{z}_n)$ and observe that by the change of variables $\bm{x} = \bm{x}_0 + \rho_n\bm{z}$, (\ref{blowup}), and \Cref{competition2} (provided that $\rho_nr_n$ is sufficiently small) 
\begin{align*}
\frac{1}{r_n} \fint_{\partial B_{r_n}(\bm{z}_n)}u_n\, d\mathcal{H}^{1} = &\ \frac{1}{2\pi  \rho_n r_n^2}\int_{\partial B_{r_n}(\bm{z}_n)}u(\bm{x}_0 + \rho_n \bm{z})\,d\mathcal{H}^1(\bm{z}) \\
= &\ \frac{1}{2 \pi \rho_n^2r_n^2}\int_{\partial B_{\rho_nr_n}(\bm{x}_n)}u(\bm{x})\,d\mathcal{H}^1(\bm{x}) \\
= &\ \frac{1}{\rho_nr_n}\fint_{\partial B_{\rho_nr_n}(\bm{x}_n)}u\,d\mathcal{H}^1 \ge C_{\min}(k)\sqrt{(h - y_n - k\rho_nr_n)_+}.
\end{align*}
In addition, we notice that by (\ref{rbr}), $y_n \le \ga + \rho_nr_n$, and therefore
\begin{equation}
\label{unw>0}
\frac{1}{r_n} \fint_{\partial B_{r_n}(\bm{z}_n)}u_n\, d\mathcal{H}^{1} \ge C_{\min}(k)\sqrt{h - \ga - 2\beta \rho_n}. 
\end{equation}
Let $\bar{\bm{z}}_n$ be such that $u_n(\bar{\bm{z}}_n) = \sup\{u_n(\bm{z}) : \bm{z} \in \partial B_{r_n}(\bm{z}_n)\}$. Then, by (\ref{unw>0}) we see that 
\begin{equation}
\label{unw>01}
u_n(\bar{\bm{z}}_n) \ge \fint_{\partial B_{r_n}(\bm{z}_n)}u_n\, d\mathcal{H}^{1} \ge r_nC_{\min}(k)\sqrt{h - \ga - 2\beta \rho_n}.
\end{equation}
Eventually extracting a subsequence (which we don't relabel), we can find a point $\bar{\bm{z}}$ such that $\bar{\bm{z}}_n \to \bar{\bm{z}}$. Consequently, by the uniform convergence of $u_n$ to $w$, (\ref{unw>01}), and the fact that $r_n \ge 1$ for every $n$, we obtain
\[
w(\bar{\bm{z}}) = \lim_{n \to \infty}u_n(\bar{\bm{z}}_n) \ge \lim_{n \to \infty}r_nC_{\min}(k)\sqrt{h - \ga - 2\beta \rho_n} \ge C_{\min}(k)\sqrt{h - \ga} > 0.
\]
This concludes the proof.
\end{proof}

\subsection{Additional properties of blow-up limits}
The following classical lemma, due to Alt and Caffarelli, is a consequence of \Cref{competition2}; for a proof we refer to Section 4.7 in \cite{MR618549}.

\begin{lem}
\label{fbchi}
Given $m, h, \la > 0$ and $\ga < h$, let $u \in \K_{\ga}$ be a local minimizer of $\J_h$ and let $w$ be a blow-up limit of $u$ at $\bm{x}_0$. Then, if $u_n$ is defined as in $(\ref{blowup})$,
\begin{itemize}
\item[$(i)$] $\partial \{u_n > 0\} \to \partial \{w > 0\}$ locally in Hausdorff distance in $\RR^2 \setminus \{(0,y) : y \ge 0\}$,
\item[$(ii)$] $\chi_{\{u_n > 0\}} \to \chi_{\{w > 0\}}$ in $L^1_{\loc}(\RR^2 \setminus \{(0,y) : y \ge 0\})$.
\end{itemize}
\end{lem}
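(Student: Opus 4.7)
The plan is to deduce both statements from the non-degeneracy property in \Cref{competition2} together with the local uniform convergence $u_n \to w$ in $(\ref{blowupconv})$. Throughout, set $\R \coloneqq \{(0,y) : y \ge 0\}$ and fix a compact $K \subset \RR^2 \setminus \R$; the change of variables $\bm{x} = \bm{x}_0 + \rho_n \bm{z}$ transfers \Cref{competition2} to the rescalings $u_n$ on every small ball in $\RR^2 \setminus \R$, uniformly in $n$ for $n$ large.

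For the upper Hausdorff direction of $(i)$, I would take $\bm{z}_n \in \partial\{u_n > 0\} \cap K$ with $\bm{z}_n \to \bm{z}$, note $w(\bm{z}) = 0$ by uniform convergence, and apply the contrapositive of \Cref{competition2} with inner radius $r/2$, valid since $u_n \not\equiv 0$ in $B_{r/2}(\bm{z}_n)$ as $\bm{z}_n$ lies on $\partial\{u_n > 0\}$. This forces $(1/r)\fint_{\partial B_r(\bm{z}_n)} u_n \, d\mathcal{H}^1$ to stay above a quantity that tends to $C_{\min}(1/2)\sqrt{h - \ga} > 0$; letting $n \to \infty$ and using uniform convergence on $\partial B_r(\bm{z})$ makes $w$ nontrivial on every small sphere around $\bm{z}$, hence $\bm{z} \in \partial\{w > 0\}$. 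For the lower direction I would argue by contradiction via a connectedness dichotomy: if $B_\e(\bm{z}) \cap \partial\{u_n > 0\} = \emptyset$ along a subsequence, then connectedness of $B_\e(\bm{z})$ forces either $B_\e(\bm{z}) \subset \{u_n > 0\}$ (so $u_n$ is harmonic there; passing to the limit, $w$ is harmonic on $B_\e(\bm{z})$ with $w \ge 0$ and $w(\bm{z}) = 0$, so the strong minimum principle gives $w \equiv 0$) or $B_\e(\bm{z}) \subset \{u_n = 0\}$ (so $w \equiv 0$ trivially). Either outcome contradicts $\bm{z} \in \partial\{w > 0\}$.

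For $(ii)$, the plan is to establish pointwise a.e.\ convergence $\chi_{\{u_n > 0\}} \to \chi_{\{w > 0\}}$ and then invoke dominated convergence (both indicators are bounded by $1$). On the open set $\{w > 0\}$, uniform convergence immediately yields $\chi_{\{u_n > 0\}} \to 1$ pointwise. On the open set $\operatorname{int}\{w = 0\}$, the average of $u_n$ over any small sphere contained in $\{w = 0\}$ tends to zero, so \Cref{competition2} eventually applies and forces $u_n \equiv 0$ on a smaller ball, giving pointwise convergence to $0$. The remaining set $\partial\{w > 0\}$ is Lebesgue-null: the argument in $(i)$ produces, at every $\bm{z} \in \partial\{w > 0\}$, the non-degeneracy bound $\sup_{B_r(\bm{z})} w \ge c r$, which combined with the Lipschitz estimate inherited from the $u_n$ yields $|B_r(\bm{z}) \cap \{w > 0\}| \ge c' r^2$; a Lebesgue density argument then rules out positive measure for $\partial\{w > 0\}$.

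I expect the lower Hausdorff direction of $(i)$ to be the main obstacle: without any a priori regularity of $\partial\{w > 0\}$, one needs to produce actual free-boundary points of $u_n$ near a prescribed $\bm{z} \in \partial\{w > 0\}$, and the harmonicity-versus-triviality dichotomy combined with the strong minimum principle is what makes this work. Once that step is in place, the upper direction and part $(ii)$ follow essentially mechanically from non-degeneracy and uniform convergence.
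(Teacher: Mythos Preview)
Your proposal is correct and follows precisely the classical Alt--Caffarelli argument (Section 4.7 in \cite{MR618549}) that the paper cites in lieu of a proof. The two ingredients you isolate---non-degeneracy (\Cref{competition2}) transferred to the rescalings and the local uniform convergence in (\ref{blowupconv})---are exactly the ones used there, and your harmonicity/triviality dichotomy via the strong minimum principle for the lower Hausdorff inclusion is the standard way to close that step.
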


\begin{thm}
\label{bumin}
Given $m, h, \la > 0$ and $\ga < h$, let $u \in \K_{\ga}$ be a local minimizer of $\J_h$ and let $w$ be a blow-up limit of $u$ at $\bm{x}_0 = (-\la/2,\ga)$. Then, for every $R > 0$, $w$ is a global minimizer of 
\begin{equation}
\label{Jbu}
\F_h(v) \coloneqq \int_{B_R(\bm{0})}\left(|\nabla v(\bm{z})|^2 + \chi_{\{v > 0\}}(\bm{z})(h - \ga)\right)\,d\bm{z}, 
\end{equation}
over the set
\begin{equation}
\begin{aligned}
\label{Kw}
\K(w,R) \coloneqq \left\{v \in H^1_{\loc}(\RR^2) : v = w \text{ on } \partial B_R(\bm{0}) \text { and } v(0,y) = 0 \text{ for } 0 < y < R \right\}.
\end{aligned}
\end{equation}
\end{thm}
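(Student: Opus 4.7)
The plan is to run the standard blow-up/minimality argument: for any $v \in \K(w, R)$, I will construct, for each large $n$, an admissible perturbation $\tilde u_n \in \K_\ga$ of $u$ that agrees with $u$ outside $B_{\rho_n R}(\bm{x}_0)$ and whose rescaling at scale $\rho_n$ approximates $v$. Local minimality of $u$ will then translate, after dividing by $\rho_n^2$ and changing variables, into a scale-free inequality between $u_n$ and a suitable approximation of $v$, and one passes to the limit via $(\ref{blowupconv})$ and \Cref{fbchi} to conclude $\F_h(w) \le \F_h(v)$.

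The first step is a density reduction: it suffices to treat $v$ satisfying $v = w$ on a fixed annular neighborhood of $\partial B_R(\bm{0})$. Indeed, setting $f := v - w$ (which has vanishing trace on $\partial B_R(\bm{0})$) and $v^\delta := w + \psi_\delta f$ with $\psi_\delta$ a radial cutoff equal to $1$ on $B_{R - 2\delta}(\bm{0})$ and vanishing outside $B_{R - \delta}(\bm{0})$, a Hardy-type estimate for $f$ against the distance to $\partial B_R(\bm{0})$ controls the term $|\nabla \psi_\delta|^2 f^2$, and together with absolute continuity of the Lebesgue integral yields $\F_h(v^\delta) \to \F_h(v)$ as $\delta \to 0^+$. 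From here onward I therefore assume $v = w$ on $B_R(\bm{0}) \setminus B_{R - \delta_0}(\bm{0})$ for some fixed $\delta_0 > 0$.

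Under this assumption, let $\sigma_n := \|u_n - w\|_{L^\infty(B_R(\bm{0}))}^{1/2}$ (which tends to $0$ by $(\ref{blowupconv})$) and let $\phi_n$ be a radial cutoff equal to $0$ on $B_{R - \sigma_n}(\bm{0})$, equal to $1$ on $B_R(\bm{0}) \setminus B_{R - \sigma_n/2}(\bm{0})$, and with $|\nabla \phi_n| \le C/\sigma_n$. Define in blow-up coordinates $\tilde v_n := (1 - \phi_n) v + \phi_n u_n$ on $B_R(\bm{0})$, $\tilde v_n := u_n$ outside, and in physical coordinates $\tilde u_n(\bm{x}) := \rho_n \tilde v_n((\bm{x} - \bm{x}_0)/\rho_n)$ inside $B_{\rho_n R}(\bm{x}_0)$ and $\tilde u_n := u$ elsewhere (the periodic extension of $u$ making this well-defined on both sides of the period). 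The identities $v(0, y) = 0$ for $0 < y < R$ and $u_n(0, y) = 0$ for $y > 0$ force $\tilde u_n \in \K_\ga$, and the Lipschitz bound from \Cref{bgl} together with the shrinking support of the perturbation yield $\|\nabla(u - \tilde u_n)\|_{L^2(\Omega;\RR^2)} + \|\chi_{\{u > 0\}} - \chi_{\{\tilde u_n > 0\}}\|_{L^1(\Omega)} = O(\rho_n)$, so $\tilde u_n$ is admissible in \Cref{sm} for $n$ large. Applying $\J_h(u) \le \J_h(\tilde u_n)$, dividing by $\rho_n^2$, and rescaling via $\bm{x} = \bm{x}_0 + \rho_n \bm{z}$ gives
\[
\int_{B_R(\bm{0})}\bigl[|\nabla u_n|^2 + \chi_{\{u_n > 0\}}(h - \ga - \rho_n y)_+\bigr]\,d\bm{z} \,\le\, \int_{B_R(\bm{0})}\bigl[|\nabla \tilde v_n|^2 + \chi_{\{\tilde v_n > 0\}}(h - \ga - \rho_n y)_+\bigr]\,d\bm{z}.
\]

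Finally, pass to the limit $n \to \infty$, noting $(h - \ga - \rho_n y)_+ \to (h - \ga)$ uniformly on $B_R(\bm{0})$. On the left-hand side, weak-$\ast$ lower semicontinuity from $(\ref{blowupconv})$ combined with the $L^1_{\loc}$ convergence $\chi_{\{u_n > 0\}} \to \chi_{\{w > 0\}}$ from \Cref{fbchi}$(ii)$ (extended trivially across the measure-zero slit $\{(0, y) : y \ge 0\}$) yields $\F_h(w) \le \liminf_n \mathrm{LHS}$. On the right-hand side, the decomposition $\nabla \tilde v_n = (1 - \phi_n) \nabla v + \phi_n \nabla u_n + \nabla \phi_n (u_n - w)$ (using $v = w$ on the support of $\nabla \phi_n$), the annular bound $\int_{B_R(\bm{0})} |\nabla \phi_n|^2 (u_n - w)^2 \le C \|u_n - w\|_{L^\infty}^2 / \sigma_n = C \|u_n - w\|_{L^\infty}^{3/2} \to 0$, and the vanishing measure of the transition annulus $\{0 < \phi_n < 1\}$ give $\int |\nabla \tilde v_n|^2 \to \int |\nabla v|^2$; the description of $\{\tilde v_n > 0\}$ (equal to $\{v > 0\}$ on $B_{R - \sigma_n}(\bm{0})$ and contained in $\{u_n > 0\} \cup \{v > 0\}$ on the complementary annulus, which has measure $O(\sigma_n) \to 0$) combined with the $L^1$ convergence of $\chi_{\{u_n > 0\}}$ gives $\chi_{\{\tilde v_n > 0\}} \to \chi_{\{v > 0\}}$ in $L^1(B_R(\bm{0}))$. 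Hence $\mathrm{RHS} \to \F_h(v)$, and the desired $\F_h(w) \le \F_h(v)$ follows. I expect the most delicate point to be the balance between the width $\sigma_n$ of the transition cutoff and $\|u_n - w\|_{L^\infty}$; without the preliminary reduction to $v = w$ near $\partial B_R(\bm{0})$, the jump term $\nabla \phi_n(u_n - v)$ could not be controlled with only the weak-$\ast$ convergence of $\nabla u_n$ and the uniform convergence of $u_n$.
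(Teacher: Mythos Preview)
Your argument is correct and is the same Alt--Caffarelli competitor strategy the paper uses, but with a different cutoff bookkeeping. The paper does \emph{not} first reduce to competitors $v$ that agree with $w$ near $\partial B_R(\bm{0})$; instead it fixes a single cutoff $\eta \in C^1_0(B_R(\bm{0});[0,1])$, takes $v_n \coloneqq v + (1-\eta)(u_n - w)$, expands $|\nabla v_n|^2 - |\nabla u_n|^2$ algebraically (using $(1-\eta)^2 \le 1$ to absorb the bad quadratic term into $-|\nabla u_n|^2$), bounds $\chi_{\{v_n>0\}} \le \chi_{\{v>0\}} + \chi_{\{\eta<1\}}$, passes to the limit in $n$ via weak $H^1$ convergence and \Cref{fbchi}, and only afterwards sends $\eta_k \nearrow 1$. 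This sidesteps both your preliminary Hardy-type density step and the careful tuning of the transition width $\sigma_n$ against $\|u_n - w\|_{L^\infty}$ that you rightly flag as the delicate point of your route. Conversely, your coupling of the cutoff with $n$ gives an honest convergence of the right-hand side to $\F_h(v)$ in one shot, whereas the paper ends with an inequality containing the residual $\chi_{\{\eta<1\}}$ term that must then be removed by a second limit. Both are standard variants of Lemma~5.4 in \cite{MR618549}.
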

The following proof is adapted from Lemma 5.4 in \cite{MR618549}.
\begin{proof}
For $u_n$ defined as in (\ref{blowup}) and $n$ large enough so that $0 < \ga - R\rho_n < \ga + R\rho_n < h$, let $\eta \in C_0^1(B_R(\bm{0});[0,1])$ and for $v \in \K(w,R)$ set
\[
v_n(\bm{z}) \coloneqq v(\bm{z}) + (1 - \eta(\bm{z}))(u_n(\bm{z}) - w(\bm{z}))
\]
and, for $\bm{x} \in B_{R\rho_n}(\bm{x}_0)$, define
\[
w_n(\bm{x}) \coloneqq \rho_nv_n\left(\frac{\bm{x} - \bm{x}_0}{\rho_n}\right).
\]
Notice that by (\ref{blowup}), $w_n = u$ on $\partial B_{R\rho_n}(\bm{x}_0)$ in the sense of traces and furthermore that $w(-\la/2,y) = 0$ for $\mathcal{L}^1$-a.e.\@ $y \in (\ga, \ga + R\rho_n)$. Then the minimality of $u$ implies that  
\[
\int_{B_{R\rho_n}(\bm{x}_0)}\left(|\nabla u|^2 + \chi_{\{u > 0\}}(h - y)\right)\,d\bm{x} \le \int_{B_{R\rho_n}(\bm{x}_0)}\left(|\nabla w_n|^2 + \chi_{\{w_n > 0\}}(h - y)\right)\,d\bm{x},
\]
and the change of variables $\bm{x} = \bm{x}_0 + \rho_n\bm{z}$, $\bm{z} = (s,t)$, then yields
\begin{equation}
\label{wtsbumin}
\int_{B_R(\bm{0})}\left(|\nabla u_n|^2 + \chi_{\{u_n > 0\}}(h - \ga - \rho_nt)\right)\,d\bm{z} \le \int_{B_R(\bm{0})}\left(|\nabla v_n|^2 + \chi_{\{v_n > 0\}}(h - \ga - \rho_nt)\right)\,d\bm{z}.
\end{equation}
Since 
\[
\nabla v_n(\bm{z}) = \nabla v(\bm{z}) + (1 - \eta(\bm{z}))(\nabla u_n(\bm{z}) - \nabla w(\bm{z})) - \nabla \eta(\bm{z})(u_n(\bm{z}) - w(\bm{z})), 
\]
we observe that 
\begin{align}
\label{gradx666}
|\nabla v_n|^2 - |\nabla u_n|^2 = &\ |\nabla v|^2 + |\nabla \eta|^2|u_n - w|^2 - 2(u_n - w)\nabla \eta \cdot \nabla v + (1 - \eta)^2|\nabla u_n - \nabla w|^2 \notag \\
&\quad \ \ \quad + 2(1 - \eta)(\nabla u_n - \nabla w)\cdot(\nabla v - \nabla \eta(u_n - w)) - |\nabla u_n|^2 \notag \\
\le &\ |\nabla v|^2 + |\nabla \eta|^2|u_n - w|^2 - 2(u_n - w)\nabla \eta \cdot \nabla v -2\nabla u_n \cdot \nabla w + |\nabla w|^2 \notag \\
&\quad \ \ \quad + 2(1 - \eta)(\nabla u_n - \nabla w)\cdot(\nabla v - \nabla \eta(u_n - w)),
\end{align}
where in the last inequality we have used the fact that $(1 - \eta)^2 \le 1$.
Fix $\e > 0$ and let $R_{\e} \coloneqq \{\bm{z} : \dist(\bm{z}, \{(0,y) : y \ge 0\}) < \e\}$. Then, by \Cref{fbchi}, it follows that
\begin{equation}
\label{Reloc}
\int_{B_R(\bm{0}) \setminus R_{\e}}\chi_{\{u_n > 0\}}(h - \ga - \rho_nt)\,d\bm{z} \to \int_{B_R(\bm{0}) \setminus R_{\e}}\chi_{\{w > 0\}}(h - \ga)\,d\bm{z}
\end{equation}
Using the fact that 
\[
\chi_{\{v_n > 0\}} \le \chi_{\{v > 0\}} + \chi_{\{\eta < 1\}},
\]
combining (\ref{wtsbumin}), (\ref{gradx666}), (\ref{Reloc}), letting $n \to \infty$, and using the fact that $u_n \rightharpoonup u$ in $H^1$, we deduce that 
\[
\int_{B_R(\bm{0})}|\nabla w|^2\,d\bm{z} + \int_{B_R(\bm{0}) \setminus R_{\e}}\chi_{\{w > 0\}}(h - \ga)\,d\bm{z} \le \int_{B_R(\bm{0})}\left(|\nabla v|^2 + (\chi_{\{v > 0\}} + \chi_{\{\eta < 1\}})(h - \ga)\right)\,d\bm{z}.
\]
Letting $\e \to 0^+$, by the monotone convergence theorem we see that
\[
\F_h(w) \le \int_{B_R(\bm{0})}\left(|\nabla v|^2 + (\chi_{\{v > 0\}} + \chi_{\{\eta < 1\}})(h - \ga)\right)\,d\bm{z}.
\]
The desired result follows from an application of the dominated convergence theorem, choosing a sequence of functions $\eta_k$ such that $\eta_k \nearrow 1$.
\end{proof}

The next result is commonly referred to as a non-oscillation lemma (see, for example, Lemma 6.1 in \cite{MR733897}, Lemma 5.2 in Chapter 3 of \cite{MR1009785}, and Lemma 2.4 in \cite{MR1291575}).
\begin{lem}
\label{nol}
Given $m, h, \la > 0$ and $\ga < h$, let $u \in \K_{\ga}$ be a local minimizer of $\J_h$ and let $w$ be a blow-up limit of $u$ at $\bm{x}_0 = (-\la/2,\ga)$. Assume that there exists an open set $G$ contained in $\{w > 0\}$, which is compactly supported in $\RR^2 \setminus \{(0,y) : y \ge 0 \}$ and bounded by the line segments
\[
\ell_i \coloneqq \{(s_i, t) : t_i < t < t_i + \e_i\}, \quad i = 1,2,
\]
and two non intersecting arcs $\phi_i$, $i = 1,2$, contained in the free boundary $\partial \{w > 0\}$ and joining the points $(s_1,t_1)$ with $(s_2,t_2)$ and $(s_1, t_1 + \e_1)$ with $(s_2, t_2 + \e_2)$. Then 
\[
|s_2 - s_1| \le \frac{\sup_G|\nabla w|(\e_1 + \e_2)}{2\sqrt{h - \ga}}.
\]
\end{lem}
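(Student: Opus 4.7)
The plan is to apply the divergence theorem to the harmonic function $w$ on the region $G$, exploiting the Bernoulli condition $|\nabla w| = \sqrt{h-\ga}$ satisfied by $w$ on the free boundary arcs $\phi_1, \phi_2$. First, I would establish the requisite regularity inside and on the boundary of $G$: since $G$ is compactly contained in $\RR^2 \setminus \{(0,y) : y \ge 0\}$, \Cref{bumin} shows that $w$ is a local minimizer of $\F_h$ near $\overline{G}$, and the interior regularity theory of Alt and Caffarelli (\Cref{exofmin}$(iv)$, specialized to the constant analytic weight $\sqrt{h-\ga}$) guarantees that $w$ is harmonic in $G \subset \{w > 0\}$ and that each arc $\phi_i$ is an analytic curve on which $|\nabla w| = \sqrt{h-\ga}$, with $\nabla w$ pointing along the inward unit normal to $\{w > 0\}$.

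Assuming without loss of generality that $s_1 \le s_2$, I then apply the divergence theorem to $\nabla w$ on $G$:
\[
0 = \int_G \Delta w\, d\bm{z} = \int_{\partial G} \partial_{\nu_G} w\, d\mathcal{H}^1,
\]
where $\nu_G$ is the outward unit normal to $G$. On each arc $\phi_i$, $\nu_G$ is opposite to the inner normal of $\{w > 0\}$, hence $\partial_{\nu_G} w = -|\nabla w| = -\sqrt{h-\ga}$, and so the arcs contribute $-\sqrt{h-\ga}\,(\mathcal{H}^1(\phi_1) + \mathcal{H}^1(\phi_2))$. On each vertical segment $\ell_i$, $\nu_G$ is a horizontal unit vector, so $|\partial_{\nu_G} w| \le |\nabla w| \le \sup_G |\nabla w|$, and the total flux contribution from $\ell_1 \cup \ell_2$ is at most $\sup_G|\nabla w|\,(\e_1 + \e_2)$ in absolute value.

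Assembling the two pieces in the divergence identity gives
\[
\sqrt{h-\ga}\,\bigl(\mathcal{H}^1(\phi_1) + \mathcal{H}^1(\phi_2)\bigr) \le \sup_G|\nabla w|\,(\e_1 + \e_2).
\]
Since each $\phi_i$ joins a point on $\ell_1$ (with first coordinate $s_1$) to a point on $\ell_2$ (with first coordinate $s_2$), its arc length satisfies the trivial bound $\mathcal{H}^1(\phi_i) \ge |s_2 - s_1|$, so the left-hand side is at least $2\sqrt{h-\ga}\,|s_2 - s_1|$, yielding the claimed inequality after rearrangement. The main technical point to verify is the regularity step which justifies the smoothness of the boundary pieces and the pointwise Bernoulli condition on the arcs, but these follow directly from \Cref{bumin} together with the analyticity of two-dimensional Alt--Caffarelli free boundaries for analytic weights; the remaining computation is just bookkeeping.
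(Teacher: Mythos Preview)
Your proposal is correct and follows essentially the same approach as the paper: apply the divergence theorem to $\nabla w$ on $G$, use the Bernoulli condition $\partial_{\nu}w = -\sqrt{h-\ga}$ on the free boundary arcs together with $\mathcal{H}^1(\phi_i) \ge |s_2 - s_1|$, and bound the flux through the vertical segments by $\sup_G|\nabla w|\,\e_i$. The only difference is that you spell out the regularity justification via \Cref{bumin} and the Alt--Caffarelli theory, whereas the paper simply asserts harmonicity and the Bernoulli condition.
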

\begin{proof}
Observe that $w$ is harmonic in $G$ and therefore by the divergence theorem
\[
0 = \int_{\partial G}\partial_{\nu}w\,d\mathcal{H}^1 = \sum_{i = 1}^2 \int_{\ell_i}\partial_{\nu}w\,d\mathcal{H}^1 + \sum_{i = 1}^2\int_{\phi_i}\partial_{\nu}w\,d\mathcal{H}^1.
\]
Notice that
\[
-\int_{\phi_i}\partial_{\nu}w\,d\mathcal{H}^1 = \mathcal{H}^1(\phi_i)\sqrt{h - \ga} \ge  |s_2 - s_1|\sqrt{h - \ga},
\]
while 
\[
\int_{\ell_i}\partial_{\nu}w\,d\mathcal{H}^1 \le \sup_G|\nabla w|\e_i.
\]
Consequently,
\[
2|s_2 - s_1|\sqrt{h - \ga} \le -\sum_{i = 1}^2\int_{\phi_i}\partial_{\nu}w\,d\mathcal{H}^1 = \sum_{i = 1}^2\int_{\ell_i}\partial_{\nu}w\,d\mathcal{H}^1 \le \sup_G|\nabla w|(\e_1 + \e_2),
\]
and the desired result readily follows.
\end{proof}

\subsection{Convergence of free boundaries for symmetric blow-up limits}
Throughout this subsection we will work under the assumptions of \Cref{bgl}. In particular, if $w$ is a blow-up limit with respect to the sequence $\{\rho_n\}_n$ of the symmetric local minimizer $u$, then the map $s \mapsto w(s,t)$ is increasing in $[0,\infty)$ (and decreasing in $(-\infty,0]$) for every $t \in \RR$. In turn, its free boundary restricted to the half-plane $\{s > 0\}$ can be described by the graph of a function $s = g_0(t)$, where $g_0 \colon \RR \to [0,\infty]$ is defined via
\begin{equation}
\label{g0DEF}
g_0(t) \coloneqq \inf\{s > 0 : w(s,t) > 0\}.
\end{equation}
We recall that by \Cref{gcont} the function $g_0$ is continuous in its effective domain. Furthermore, if we let
\begin{equation}
\label{gn}
g_n(t) \coloneqq \frac{g(\ga + \rho_n t) - g(\ga)}{\rho_n} = \frac{g(\ga + \rho_nt) + \frac{\la}{2}}{\rho_n}
\end{equation} 
for $g$ defined as in (\ref{gdef}), then we have that
\begin{equation}
\label{gninf}
g_n(t) = \inf\{s > 0 : u_n(s,t) > 0\}.
\end{equation}
Thus the free boundary of $u_n$ in $B_R(\bm{0}) \cap \{s > 0\}$ is described by the graph of $g_n$. It is then natural to ask whether $g_n$ converges to $g_0$.

\begin{lem}
\label{gntog0}
Let $g_n, g_0$ be given as above. Then for every $\tau \in \RR$ such that $g_0(\tau) < \infty$ we have that $g_0$ is finite in a neighborhood of $\tau$ and
\[
g_0(\tau) = \lim_{n \to \infty}g_n(\tau).
\]
\end{lem}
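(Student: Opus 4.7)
The plan is to control $\limsup_{n} g_n(\tau)$ and $\liminf_{n} g_n(\tau)$ separately. The upper bound $\limsup_{n} g_n(\tau) \le g_0(\tau)$ together with finiteness of $g_0$ in a neighborhood of $\tau$ is essentially immediate: for every $\e > 0$, the $s$-monotonicity of $w(\cdot, \tau)$ (inherited from the symmetric functions $u_n$) combined with the definition of $g_0(\tau)$ gives $w(g_0(\tau) + \e, \tau) > 0$. Continuity of $w$ extends this positivity to a two-dimensional neighborhood of $(g_0(\tau) + \e, \tau)$, and the $s$-monotonicity then yields $g_0(t) \le g_0(\tau) + \e$ for $t$ in a neighborhood of $\tau$, while the uniform convergence $u_n \to w$ forces $u_n(g_0(\tau) + \e, \tau) > 0$ and hence $g_n(\tau) \le g_0(\tau) + \e$ for all $n$ large.

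The matching lower bound $\liminf_{n} g_n(\tau) \ge g_0(\tau)$ I would prove by contradiction. The case $g_0(\tau) = 0$ follows at once from the upper bound combined with $g_n \ge 0$, so assume $g_0(\tau) > 0$ and take a subsequence $g_{n_k}(\tau) \to L$ with $0 \le L < g_0(\tau)$. If $L > 0$, then $(L, \tau)$ lies at positive distance from the axis $\{(0, y) : y \ge 0\}$; since $(g_{n_k}(\tau), \tau) \in \partial\{u_{n_k} > 0\}$ converges to $(L, \tau)$, \Cref{fbchi} gives $(L, \tau) \in \partial \{w > 0\}$. Combined with the identification $\{w > 0\} = \{(s, t) : s > g_0(t)\}$ (a consequence of the $s$-monotonicity) and the continuity of $g_0$ at $\tau$ established below, this forces $L = g_0(\tau)$, contradicting $L < g_0(\tau)$. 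If instead $L = 0$, I would apply the contrapositive of the non-degeneracy \Cref{competition2} to $u$ at the ball $B_{\rho_{n_k} r}(\bm{x}_0 + \rho_{n_k}(g_{n_k}(\tau), \tau))$ for a fixed $r \in (0, g_0(\tau)/2)$: after rescaling, this produces $\bm{z}_k \in \partial B_r((g_{n_k}(\tau), \tau))$ with $u_{n_k}(\bm{z}_k) \ge c\, r \sqrt{h - \ga}$ for some universal $c > 0$ and all $n_k$ sufficiently large. Extracting a subsequential limit $\bm{z}_k \to \bm{z}^* = (s^*, t^*)$, the uniform convergence, the symmetry $w(s, t) = w(-s, t)$, and $s$-monotonicity together give $g_0(t^*) \le |s^*| \le r$ with $|t^* - \tau| \le r$, which for $r$ small enough contradicts the continuity of $g_0$ at $\tau$.

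The main technical step, used in both cases above, is the continuity of $g_0$ at $\tau$ under the assumption $g_0(\tau) > 0$. The plan for this is to invoke \Cref{bumin}: $w$ is a global minimizer of the constant-weight functional $\F_h$ on $\K(w, R)$ for every $R > 0$, so on $\{s > 0\}$ (where $w$ is unconstrained) the classical Alt-Caffarelli regularity applies, and the free boundary of $w$ is an analytic curve in a neighborhood of $(g_0(\tau), \tau)$. The $s$-monotonicity of $w(\cdot, t)$ then rules out a horizontal tangent to this curve at $(g_0(\tau), \tau)$, because such a tangent would, by analyticity, force the curve to meet nearby horizontal slices $\{t = \text{const}\} \cap \{s > 0\}$ in two distinct points, violating uniqueness of $g_0(t)$ on each slice. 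Hence the curve is locally the graph $s = g_0(t)$ of an analytic function in a neighborhood of $\tau$, yielding the required continuity.
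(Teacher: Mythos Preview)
Your argument is essentially correct, but it diverges from the paper's in two places. For finiteness of $g_0$ near $\tau$ you use continuity of $w$ plus $s$-monotonicity, which is shorter than the paper's route through the non-oscillation lemma (\Cref{nol}). For the lower bound the paper does \emph{not} split into cases $L>0$ and $L=0$: instead, having fixed $\e>0$ and used the continuity of $g_0$ to find $\delta$ with $g_0(t)\ge g_0(\tau)-\e$ for $|t-\tau|<\delta$, it places a single ball $B_r(\sigma,\tau)$ with $\sigma=g_0(\tau)-\e-r$ and $r<\min\{\e,\delta\}$ inside $\{w=0\}$, and then invokes \Cref{competition2} (rescaled) to get $B_{r/2}(\sigma,\tau)\subset\{u_n=0\}$ for large $n$, so $g_n(\tau)\ge\sigma+r/2>g_0(\tau)-\tfrac32\e$. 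This unified argument avoids the delicate point in your $L=0$ branch of keeping the test ball away from $\bm{x}_0$, which is needed for \Cref{competition2} but becomes problematic when $|\tau|$ is small compared to $g_0(\tau)$.

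Your continuity argument for $g_0$ via Alt--Caffarelli regularity and \Cref{bumin} is sound in outline (the paper simply asserts that the proof of \Cref{gcont} carries over to $w$), but the stated reason for excluding a horizontal tangent is incomplete: an analytic curve tangent to $\{t=\tau\}$ with \emph{odd}-order contact still meets each nearby horizontal slice exactly once, so your ``two distinct points'' contradiction does not apply there. What actually rules out a horizontal tangent in all cases is the $s$-monotonicity on the slice $\{t=\tau\}$ itself: if the curve is locally $t=f(s)$ with $f'(g_0(\tau))=0$, then $\{w>0\}\cap\{t=\tau\}$ near $g_0(\tau)$ is determined by the sign of $f(s)-\tau$, which either lies on both sides of $s=g_0(\tau)$, on neither, or on the wrong side, each contradicting $w(s,\tau)=0$ for $s<g_0(\tau)$ and $w(s,\tau)>0$ for $s>g_0(\tau)$.
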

\begin{proof}
\textbf{Step 1:} Let $\tau$ be as in the statement. We begin by proving that either $g_0(t) < \infty$ for every $t < \tau$ or $g_0(t) < \infty$ for every $t > \tau$. Indeed, assume for the sake of contradiction that there exist $t_1 < \tau < t_2$ such that $g_0(t_1) = g_0(t_2) = \infty$, so that $w(s,t_1) = w(s,t_2) = 0$ for every $s > 0$ by (\ref{g0DEF}), and fix $s > g_0(\tau)$. For every $M > 0$, by the continuity of $w$, there exist $T_1, T_2 \in \RR$, $\e_1, \e_2 > 0$ such that
\[
t_1 \le T_i < \tau < T_i + \e_i \le t_2 
\]
and with the property that
\[
\{(s, T_1), (s,T_1 + \e_1 ), (s + M, T_2), (s + M, T_2 + \e)\} \subset \partial \{w > 0\}.
\]
Let $G$ be the region bounded by the free boundary $\partial \{w > 0\}$ and the two vertical line segments that connect the points $(s,T_1)$ with $(s,T_1 + \e_1)$ and $(s + M, T_2)$ with $(s + M, T_2 + \e_2)$. Then \Cref{nol} yields
\[
M \le \frac{C(t_2 - t_1)}{\sqrt{h - \ga}},
\]
a contradiction to the fact that $M$ is arbitrary. Hence $g_0(t) < \infty$ for all $t \le \tau$ or for all $t \ge \tau$. Without loss of generality, we assume the latter. Arguing by contradiction, assume that there exists a sequence $t_n \to \tau^-$ such that $g_0(t_n) = \infty$. Reasoning as above we see that necessarily $g_0(t) = \infty$ for $t_1 \le t < \tau$. In turn, since $w$ is continuous, it must be the case that $w(s,\tau) = 0$ for every $s > 0$, a contradiction to the assumption that $g_0(\tau) < \infty$.
\newline
\textbf{Step 2: } Suppose that there exists $\e > 0$ such that 
\begin{equation}
\label{limsupg0}
L \coloneqq \limsup_{n \to \infty}g_n(\tau) \ge g_0(\tau) + \e.
\end{equation}
By eventually extracting a subsequence we can assume that the limsup is achieved, and furthermore we notice that for every $n$ sufficiently large, (\ref{gninf}) and (\ref{limsupg0}) imply that $u_n(L - \e/2,\tau) = 0$. Since the map $s \mapsto u_n(s,\tau)$ is increasing by assumption, we have that $u_n(s,\tau) = 0$ for every $s \le L - \e/2$. In turn, passing to the limit in $n$, $w(s,\tau) = 0$ for every $s \le L - \e/2$, which is in contradiction with the definition of $g_0$ (see (\ref{g0DEF}) and (\ref{limsupg0})). This shows that 
\[
\limsup_{n \to \infty}g_n(\tau) \le g_0(\tau).
\]
Notice that if $g_0(\tau) = 0$ then there is nothing else to prove. Therefore, we can assume without loss that $g_0(\tau) > 0$. Assume for the sake of contradiction that for some $\e > 0$
\begin{equation}
\label{liminfg0}
\liminf_{n \to \infty}g_n(\tau) \le g_0(\tau) - 2\e.
\end{equation}
Since $g_0$ is continuous in a neighborhood of $\tau$, there exists $\delta = \delta(\e, \tau) > 0$ such that if $|t - \tau| < \delta$ then 
\[
g_0(\tau) - \e \le g_0(t).
\]
Notice that without loss of generality we can assume that $4\e < g_0(\tau)$. Fix $r < \min\{\e,\delta\}$ and set $\sigma \coloneqq g_0(\tau) - \e - r$.
Then $B_r(\sigma,\tau) \subset \{w = 0\}$ and thus it follows from \Cref{competition2} that 
\[
B_{r/2}(\sigma,\tau) \subset \{u_n = 0\}
\]
for every $n$ sufficiently large. In particular, $u_n(s,\tau) = 0$ for every $s \le \sigma + r/2$ and therefore
\begin{equation}
\label{gn52}
g_n(\tau) \ge \sigma + \frac{r}{2} \ge g_0(\tau) - \frac{3}{2}\e.
\end{equation}
Since (\ref{gn52}) is in contradiction with (\ref{liminfg0}) we conclude that
\[
\liminf_{n \to \infty}g_n(\tau) \ge g_0(\tau),
\]
which completes the proof.
\end{proof}

\section{A boundary monotonicity formula}
In this section we show that the boundary monotonicity formula of Weiss (see Theorem 3.3 and Corollary 3.4 in \cite{MR2047400}) holds at the point $\bm{x}_0 = (-\la/2,\ga)$ for local minimizers of $\J_h$ in $\K_{\ga}$ with bounded gradient in a neighborhood of $\bm{x}_0$. 

\begin{thm}
\label{bmf}
Given $m, h, \la > 0$ and $\ga < h$, let $u$ be a local minimizer of $\J_h$ in $\K_{\ga}$. Assume that there exist a constant $C > 0$ and $\mu < \min\{\ga, h- \ga, \la\}$ such that $|\nabla u(\bm{x})| \le C$ for all $\bm{x} \in B_{\mu}(\bm{x}_0)$. Then there exists $0 < r_0 < \mu$, depending on $C$ and $\e_0$, such that if for $r \in (0,r_0)$ we define
\begin{equation}
\label{Phidef}
\Phi(r) \coloneqq r^{-2}\int_{B_r(\bm{x}_0)}\left(|\nabla u|^2 + \chi_{\{u > 0\}}(h - y)\right)\,d\bm{x} - r^{-3}\int_{\partial B_r(\bm{x}_0)}u^2\,d\mathcal{H}^1,
\end{equation}
then for $\mathcal{L}^1$-a.e.\@ $\rho$ and $\sigma$ such that $0 < \rho < \sigma < r_0$,
\[
\Phi(\sigma) - \Phi(\rho) = \int_{\rho}^{\sigma}r^{-2}\int_{\partial B_r(\bm{x}_0)}2\left(\partial_{\nu}u - \frac{u}{r}\right)^2\,d\mathcal{H}^1dr - \int_{\rho}^{\sigma}r^{-3}\int_{B_r(\bm{x}_0)}\chi_{\{u > 0\}}(y - \ga)\,d\bm{x}dr.
\]
\end{thm}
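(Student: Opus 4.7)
The plan is to produce an explicit pointwise formula for $\Phi'(r)$ at $\mathcal{L}^1$-a.e.\@ $r \in (0, r_0)$ and then integrate over $(\rho, \sigma)$. Two ingredients are needed: a Pohozaev-type identity coming from local minimality under a family of radial inner variations, and a direct computation of $\Phi'(r)$ that uses the harmonicity of $u$ in $\{u > 0\}$. The final formula emerges by substituting the first into the second and recognizing a perfect square.

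For the Pohozaev identity, I would work with the periodic extension of $u$ to $\RR^2_+$ and consider, for each $r \in (0, r_0)$, the radial vector field $\xi(\bm{x}) \coloneqq \eta(|\bm{x} - \bm{x}_0|)(\bm{x} - \bm{x}_0)$, where $\eta \in C^{\infty}_c([0,\infty))$ smoothly approximates $\chi_{[0,r]}$ and the field is extended $\la$-periodically in $x$. The decisive observation is that such $\xi$ is exactly tangent to the upper Dirichlet ray $\{x = -\la/2,\, y > \ga\}$ (since $\bm{x} - \bm{x}_0$ is vertical there), vanishes near $\{y = 0\}$ provided $r < \ga$, and is automatically compatible with the periodic boundary condition; hence, for $|t|$ small, the flow $\Psi_t \coloneqq \id + t\xi$ is a $C^1$-diffeomorphism mapping $\K_\ga$ into itself, so $u_t \coloneqq u \circ \Psi_t^{-1}$ belongs to $\K_\ga$. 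Using the hypothesis $|\nabla u| \le C$ in $B_\mu(\bm{x}_0)$, one quantifies $r_0 = r_0(C,\e_0,\mu,\la,\ga,h-\ga)$ so that the $\e_0$-smallness required by \Cref{sm} is met, and local minimality together with $\tfrac{d}{dt}\J_h(u_t)\big|_{t=0} = 0$, after passing to the distributional limit $\eta \nearrow \chi_{[0,r]}$ (which is why the conclusion only holds for $\mathcal{L}^1$-a.e.\@ $r$), produces
\[
r\!\int_{\partial B_r(\bm{x}_0)}\!(u_\tau^2 - u_\nu^2)\,d\mathcal{H}^1 + r\!\int_{\partial B_r(\bm{x}_0)}\!\chi_{\{u>0\}}(h-y)\,d\mathcal{H}^1 = 2\!\int_{B_r(\bm{x}_0)}\!\chi_{\{u>0\}}(h-y)\,d\bm{x} - \!\int_{B_r(\bm{x}_0)}\!\chi_{\{u>0\}}(y-\ga)\,d\bm{x},
\]
where $u_\nu \coloneqq \partial_\nu u$ and $u_\tau$ denote the radial and tangential derivatives of $u$ on $\partial B_r(\bm{x}_0)$.

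For the direct computation of $\Phi'(r)$, I would first observe that, since $u$ is harmonic in $\{u > 0\}$ and vanishes on both $\partial\{u > 0\}$ and on the Dirichlet ray, a distributional integration by parts (exploiting that $u\,\Delta u = 0$ as a Radon measure, because $\Delta u$ is supported in $\{u = 0\}$) yields $\int_{B_r(\bm{x}_0)}|\nabla u|^2\,d\bm{x} = \int_{\partial B_r(\bm{x}_0)} u\,\partial_\nu u\,d\mathcal{H}^1$ for every $r < r_0$. Differentiating the three terms defining $\Phi(r)$—via the co-area formula for the bulk integrals and the radial scaling identity for $\int_{\partial B_r} u^2$—and using the previous identity to replace the bulk $|\nabla u|^2$, one arrives at an expression for $\Phi'(r)$ involving the boundary quantities $\int_{\partial B_r} u\,\partial_\nu u$, $\int_{\partial B_r} u^2$, $\int_{\partial B_r}(u_\nu^2 + u_\tau^2)$, $\int_{\partial B_r}\chi_{\{u>0\}}(h-y)$, and the bulk term $\int_{B_r}\chi_{\{u>0\}}(h-y)$. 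Substituting the Pohozaev identity (multiplied by $r^{-3}$) to trade $u_\tau^2$ and the bulk $(h-y)$ contribution for $u_\nu^2$ and the bulk $(y - \ga)$ contribution, and then reassembling the remaining boundary terms as the perfect square $(\partial_\nu u - u/r)^2$, delivers
\[
\Phi'(r) = 2r^{-2}\!\int_{\partial B_r(\bm{x}_0)}\!\Bigl(\partial_\nu u - \frac{u}{r}\Bigr)^{\!2}\,d\mathcal{H}^1 - r^{-3}\!\int_{B_r(\bm{x}_0)}\!\chi_{\{u>0\}}(y-\ga)\,d\bm{x}.
\]
Integrating this equality from $\rho$ to $\sigma$ produces the claimed monotonicity formula.

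The main obstacle is the Pohozaev step: the mixed periodic--Dirichlet structure at $\bm{x}_0$ forces a careful choice of inner variation so that both boundary conditions are preserved exactly (the radial form of $\xi$ is essential for tangency to the Dirichlet ray, and the $\la$-periodic extension is essential for compatibility with periodicity), and the quantitative threshold $r_0$ must simultaneously keep the support of the flow inside $B_\mu(\bm{x}_0)$, ensure $\|\nabla(u - u_t)\|_{L^2(\Omega;\RR^2)} + \|\chi_{\{u>0\}} - \chi_{\{u_t>0\}}\|_{L^1(\Omega)} \le \e_0$, and guarantee $u_t \in \K_\ga$. Once these admissibility checks are in place, the remainder reduces to routine differentiation and algebraic rearrangement.
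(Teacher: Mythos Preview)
Your approach is correct and follows the same three-step architecture as the paper (a Pohozaev identity from inner variations, the integration-by-parts formula $\int_{B_r}|\nabla u|^2=\int_{\partial B_r}u\,\partial_\nu u$, and direct differentiation of $\Phi$ followed by reassembly into the perfect square), but the way you obtain the Pohozaev identity is genuinely different. The paper works with test fields $\bm{\phi}_\delta=\eta_\delta\xi_\delta\,\bm{x}$ supported \emph{away} from the Dirichlet ray (via the cutoff $\xi_\delta$) and is then forced to prove that the correction term $I_3^\delta$ coming from $\nabla\xi_\delta$ vanishes as $\delta\to0$; that step hinges on the $C^{1,1/2}$ boundary regularity of Chang-Lara and Savin to show $\nabla_\tau u=0$ along the ray. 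Your key geometric observation---that the radial field $\eta(|\bm{x}-\bm{x}_0|)(\bm{x}-\bm{x}_0)$ is already tangent to the Dirichlet ray (since $\bm{x}-\bm{x}_0$ is vertical there), so that $\id+t\xi$ maps the ray bijectively onto itself and $u_t\in\K_\gamma$ without any cutoff---eliminates this correction term altogether and hence bypasses the Chang-Lara--Savin input. Similarly, the paper carries out the integration-by-parts step by excising $\eta$-neighborhoods of the ray and again citing the $C^{1,1/2}$ regularity, whereas your distributional route (note that $u\ge0$ and harmonic in $\{u>0\}$ force $u$ to be subharmonic in the full ball $B_r(\bm{x}_0)$, so $\Delta u$ is a nonnegative Radon measure supported in $\{u=0\}$ and $u\,\Delta u=0$) is self-contained. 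What your approach buys is independence from the deep boundary-regularity result; what the paper's approach buys is a template that would also handle variations not tangent to the constraint set.
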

\begin{proof}
\textbf{Step 1:} For simplicity we consider the translated function 
\[
w(\bm{x}) = u(\bm{x}_0 + \bm{x}).
\]
We begin by showing that for $\mathcal{L}^1$-a.e.\@ $r \in (0,r_0)$,
\begin{equation}
\label{limitidentity}
\int_{B_r(\bm{0})}\chi_{\{w > 0\}}(2h - 2\ga - 3y)\,d\bm{x} = \int_{\partial B_r(\bm{0})}r\left(|\nabla w|^2 - 2\left(\partial_{\nu}w\right)^2 + \chi_{\{w > 0\}}(h - \ga - y)\right)\,d\mathcal{H}^1.
\end{equation}
To this end, we consider the functional 
\[
\F_h(v) \coloneqq \int_{B_{r_0}(\bm{0})}\left(|\nabla v|^2 + \chi_{\{v > 0\}}(h - \ga - y)\right)\,d\bm{x},
\]
defined for $v \in \K(w, r_0)$ (see (\ref{Kw})). Notice that by the minimality of $u$ and our choice of $r_0$ the first variation of $\F_h$ with respect to domain variations vanishes at $w$. To be precise, for every $\bm{\phi} = (\phi_1,\phi_2) \in C^1(B_{r_0}(\bm{0}); \RR^2)$ which is compactly supported in $B_{r_0}(\bm{0}) \setminus \{(0,y) : y \ge 0\}$, if we set $w_{\e}(\bm{x}) \coloneqq w(\bm{x} + \e \bm{\phi}(\bm{x}))$ we have that $w_{\e} \in \K(w,r_0)$ for every  $\e$ sufficiently small and
\begin{align}
\label{domvar}
0 = &\ - \frac{d}{d\e}\F_h(w_{\e})_{|_{\e = 0}} \notag \\
= &\ \int_{B_{r_0}(\bm{0})}\Big(|\nabla w|^2\dive \bm{\phi} - 2\nabla w D\bm{\phi} \nabla w + \chi_{\{w > 0\}}(h - \ga - y) \dive \bm{\phi} - \chi_{\{w > 0\}}\phi_2 \Big)\,d\bm{x}.
\end{align}
For $r \in (0,r_0)$ and $\delta > 0$ define 
\begin{equation}
\label{etaxi}
\eta_\delta(\bm{x}) \coloneqq \max\left\{0,\min\left\{1, \frac{1}{\delta}(r - |\bm{x}|)\right\}\right\}, \quad \xi_\delta(\bm{x}) \coloneqq \min\left\{1,\frac{1}{\delta}\dist(\bm{x}, \{(0,y) : y \ge 0\})\right\}.
\end{equation}
Let $\bm{\phi}_\delta(\bm{x}) \coloneqq \eta_\delta(\bm{x}) \xi_\delta(\bm{x})\bm{x}$. By a standard density argument, for every $\delta > 0$ we can find a sequence $\{\bm{\phi}_{\delta,\e}\}_{\e}$ of functions in $C^1(B_{r_0}(\bm{0}); \RR^2)$ with compact support in $B_{r_0}(\bm{0}) \setminus \{(0,y) : y \ge 0\}$ such that $\bm{\phi}_{\delta,\e} \to \bm{\phi}_\delta$ in $W^{1,\infty}(B_{r_0}(\bm{0}),\RR^2)$. Using $\bm{\phi}_{\delta,\e}$ as test function in (\ref{domvar}), letting $\e \to 0$, and noticing that
\begin{align*}
D\bm{\phi}_\delta = &\ \eta_\delta \xi_\delta \id + \eta_\delta \nabla \xi_\delta \otimes \bm{x} + \xi_\delta \nabla \eta_\delta \otimes \bm{x}, \\
\dive \bm{\phi}_\delta = &\ 2 \eta_\delta \xi_\delta + \eta_\delta \nabla \xi_\delta \cdot \bm{x} + \xi_\delta \nabla \eta_\delta \cdot \bm{x}, \\
\end{align*}
we obtain the identity
\begin{equation}
\label{sumIk}
I_1^\delta + I_2^\delta + I_3^\delta = 0,
\end{equation}
where 
\begin{align*}
I_1^\delta \coloneqq &\, \int_{B_{r_0}(\bm{0})}\eta_\delta\xi_\delta\chi_{\{w > 0\}}(2h - 2\ga - 3y)\,d\bm{x}, \\
I_2^\delta \coloneqq &\, \int_{B_{r_0}(\bm{0})}\xi_\delta\Big(|\nabla w|^2\nabla \eta_\delta \cdot \bm{x} - 2 (\nabla w \cdot \bm{x})(\nabla w \cdot \nabla \eta_\delta) + \chi_{\{w > 0\}}(h - \ga - y)\nabla \eta_\delta \cdot \bm{x}\Big)\,d\bm{x}, \\
I_3^\delta \coloneqq &\, \int_{B_{r_0}(\bm{0})}\eta_\delta \Big(|\nabla w|^2\nabla \xi_\delta \cdot \bm{x} - 2 (\nabla w \cdot \bm{x})(\nabla w \cdot \nabla \xi_\delta) + \chi_{\{w > 0\}}(h - \ga - y) \nabla \xi_\delta \cdot \bm{x}\Big)\,d\bm{x}.
\end{align*}
By (\ref{etaxi}) and the monotone convergence theorem we have that
\begin{equation}
\label{I1klimit}
I_1^\delta \to \int_{B_r(\bm{0})}\chi_{\{w > 0\}}(2h - 2\ga - 3y)\,d\bm{x}.
\end{equation}
Observe that 
\[
\nabla \eta_\delta(\bm{x}) = 
\left\{
\arraycolsep=1.4pt\def\arraystretch{1.6}
\begin{array}{ll}
\displaystyle -\frac{\bm{x}}{\delta|\bm{x}|} & \text{ in } B_r(\bm{0}) \setminus B_{r - \delta}(\bm{0}), \\
0 & \text{ otherwise}.
\end{array}
\right.
\]
Thus we can rewrite $I_2^\delta$ as follows:
\begin{align*}
I_2^\delta = &\ - \frac{1}{\delta}\int_{B_r(\bm{0}) \setminus B_{r - \delta}(\bm{0})}\xi_\delta|\bm{x}|\left(|\nabla w|^2 - 2\left(\nabla w \cdot \frac{\bm{x}}{|\bm{x}|}\right)^2 + \chi_{\{w > 0\}}(h - \ga - y)\right)\,d\bm{x} \\
= &\ - \frac{1}{\delta}\int_{r - \delta}^r\int_{\partial B_s(\bm{0})}\xi_\delta s\left(|\nabla w|^2 - 2\left(\nabla w \cdot \frac{\bm{x}}{s}\right)^2 + \chi_{\{w > 0\}}(h - \ga - y)\right)\,d\mathcal{H}^1(\bm{x})ds.
\end{align*}
Consequently, by Fubini's theorem and Lebesgue's differentiation theorem, for $\mathcal{L}^1$-a.e.\@ $0 < r < r_0$, we have that
\begin{equation}
\label{I2klimit}
I_2^\delta \to - \int_{\partial B_r(\bm{0})}r\left(|\nabla w|^2 - 2\left(\partial_{\nu}w\right)^2 + \chi_{\{w > 0\}}(h - \ga - y) \right)\,d\mathcal{H}^1
\end{equation}
as $\delta \to 0^+$. By (\ref{sumIk}), (\ref{I1klimit}), and (\ref{I2klimit}), it follows that to conclude the proof of (\ref{limitidentity}) we are left to show that $I_3^\delta \to 0$ as $\delta \to 0^+$. To this end, we let
\begin{align*}
\Omega_\delta^+ \coloneqq &\, \left\{\bm{x} \in B_r(\bm{0}) : x > 0,\ y > 0, \text{ and } \dist(\bm{x}, \{(0,y) : y \ge 0\}) < \delta \right\}, \\
\Omega_\delta^- \coloneqq &\, \left\{\bm{x} \in B_r(\bm{0}) : x < 0,\ y > 0, \text{ and } \dist(\bm{x}, \{(0,y) : y \ge 0\}) < \delta \right\}, \\
\Omega_\delta^* \coloneqq &\, \left\{\bm{x} \in B_{\delta}(\bm{0}) : y < 0 \right\},
\end{align*}
and notice that
\begin{equation}
\label{gradxi}
\nabla \xi_\delta(\bm{x}) = 
\left\{
\arraycolsep=1.4pt\def\arraystretch{2}
\begin{array}{ll}
\displaystyle (\pm \delta^{-1},0) & \text{ in } \Omega_\delta^{\pm}, \\
\displaystyle \frac{\bm{x}}{\delta|\bm{x}|} & \text{ in } \Omega_\delta^*, \\
0 & \text{ otherwise}.
\end{array}
\right.
\end{equation}
From (\ref{gradxi}) and the fact that 
\[
\dist(\bm{x}, \{(0,y) : y \ge 0\}) = 
\left\{
\arraycolsep=1.4pt\def\arraystretch{2}
\begin{array}{ll}
\displaystyle |x| & \text{ in } \Omega_\delta^{\pm}, \\
\displaystyle |\bm{x}| & \text{ in } \Omega_\delta^*, \\
\end{array}
\right.
\]
we see that $|\nabla \xi_\delta \cdot \bm{x}| \le 1$ in $B_{r_0}(\bm{0})$, and consequently
\[
\left|\int_{B_{r_0}(\bm{0})}\eta_\delta \left(|\nabla w|^2 + \chi_{\{w > 0\}}(h - \ga - y)\right)\nabla \xi_\delta \cdot \bm{x}\,d\bm{x}\right| \le \int_{\{\nabla \xi_\delta \neq 0\}}\left(|\nabla w|^2 + \chi_{\{w > 0\}}(h - \ga - y)\right)\,d\bm{x}.
\]
Furthermore, the right-hand side in the previous inequality vanishes as $\delta \to 0^+$ by the dominated convergence theorem. It remains to show that 
\[
\int_{B_{r_0}(\bm{0})}\eta_\delta\left(x\partial_xw + y \partial_yw\right)\left(\nabla w \cdot \nabla \xi_\delta\right)\,d\bm{x} \to 0
\] 
as $\delta \to 0^+$. Since $|\nabla \xi_\delta||x| \le 1$, reasoning as above we see that 
\[
\left|\int_{B_{r_0}(\bm{0})}\eta_\delta x\partial_x w\left(\nabla w \cdot \nabla \xi_\delta\right)\,d\bm{x}\right| \le \int_{\{\nabla \xi_\delta \neq 0\}}|\partial_xw||\nabla w|\,d\bm{x} \to 0.
\]
Fix $\e \in (0,r)$. Using (\ref{gradxi}) and the fact that $\eta_\delta$ vanishes outside $B_r(\bm{0})$, we see that
\begin{align*}
\left|\int_{\Omega_\delta^+}\eta_\delta y \partial_yw(\nabla w \cdot \nabla \xi_\delta)\,d\bm{x}\right| \le &\ \frac{1}{\delta}\int_{(0,\delta) \times (0,r)}y|\partial_y w||\partial_xw|\,d\bm{x} \notag \\
\le &\ \frac{\e}{\delta}\int_{(0,\delta) \times (0,\e)}|\partial_y w||\partial_xw|\,d\bm{x} + \frac{r}{\delta}\int_{(0,\delta) \times (\e,r)}|\partial_y w||\partial_xw|\,d\bm{x}.
\end{align*}
Since $\nabla w$ is bounded, the first term on the right-hand side can be bounded uniformly in $\delta$, and so it vanishes as $\e \to 0^+$. By Theorem 1.1 in \cite{MR3916702}, we have that the extended free boundary 
\[
\overline{\partial \{w > 0\}} \cap \Omega_\delta^+ \setminus B_{\e/2}(\bm{0})
\]
is of class $C^{1,1/2}$. In turn, it follows from Corollary 8.36 in \cite{gilbargtrudinger} that 
\begin{equation}
\label{brw}
w \in C^{1,1/2}(\overline{\{w > 0\} \cap \Omega_\delta^+} \setminus B_{\e}(\bm{0})).
\end{equation}
In particular, this implies that $\nabla_{\tau}w = 0$ on $\left(\partial \{w > 0\} \cup \{(0,y) : y \ge 0\}\right) \setminus B_{\e}(\bm{0})$. Consequently, a change of variables and the dominated convergence theorem give
\[
\frac{r}{\delta}\int_{(0,\delta) \times (\e,r)}|\partial_y w||\partial_xw|\,d\bm{x} = r\int_{(0,1) \times (\e,r)}|\partial_yw(\delta x,y)||\partial_xw(\delta x,y)|\,d\bm{x} \to 0
\]
as $\delta \to 0^+$. Since similar estimates hold in $\Omega_\delta^-$ and $\Omega_\delta^*$, this concludes the proof of (\ref{limitidentity}).
\newline
\textbf{Step 2: } This step is dedicated to the proof of the integration by parts formula
\begin{equation}
\label{IBP}
\int_{B_r(\bm{0})}|\nabla w|^2\,d\bm{x} = \int_{\partial B_r(\bm{0})}w\partial_{\nu}w\,d\mathcal{H}^1,
\end{equation}
which holds for $\mathcal{L}^1$-a.e.\@ $r \in (0,r_0)$, and is in spirit very close to the result of Lemma 3.1 in \cite{MR1149865}. Let
\[
U_{\e,\eta} \coloneqq B_r(\bm{0}) \setminus \left(B_{\e}(\bm{0}) \cup \{\bm{x} : \dist(\bm{x}, \{(0,y) : y \ge 0\}) < \eta\} \right),
\]
and observe that by the divergence theorem, together with the fact that $w = 0$ on $\partial \{w > 0\}$,
\[
\int_{U_{\e,\eta} \cap \{w > 0\}}|\nabla w|^2\,d\bm{x} = \int_{\partial U_{\e,\eta} \cap \{w > 0\}}w\partial_{\nu}w\,d\mathcal{H}^1.
\]
Next, using the fact that $w$ is Lipschitz continuous in $B_{r_0}(\bm{0})$, that $w(0,y) = 0$ for $y > 0$, and (\ref{brw}), we obtain
\[
\lim_{\e \to 0^+} \lim_{\eta \to 0^+}\int_{U_{\e,\eta} \cap \{w > 0\}}|\nabla w|^2\,d\bm{x} = \lim_{\e \to 0^+} \int_{\partial(B_r(\bm{0}) \setminus B_{\e}(\bm{0}))}w\partial_{\nu}w\,d\mathcal{H}^1 = \int_{\partial B_r(\bm{0})}w\partial_{\nu}w\,d\mathcal{H}^1.
\]
Formula (\ref{IBP}) follows immediately upon noticing that 
\[
\int_{B_r(\bm{0})}|\nabla w|^2\,d\bm{x} = \lim_{\e \to 0^+} \lim_{\eta \to 0^+}\int_{U_{\e,\eta} \cap \{w > 0\}}|\nabla w|^2\,d\bm{x}.
\]
\newline
\textbf{Step 3: } By a direct computation we see that for $\mathcal{L}^1$-a.e.\@ $r \in (0,r_0)$,
\begin{align}
\label{Phi'}
\Phi'(r) = &\ -2r^{-3}\int_{B_r(\bm{0})}\left(|\nabla w|^2 + \chi_{\{w > 0\}}(h - \ga - y)\right)\,d\bm{x} \notag \\
&\ + r^{-2}\int_{\partial B_r(\bm{0})}\left(|\nabla w|^2 + \chi_{\{w > 0\}}(h - \ga - y) + 2r^{-2}w^2 - 2r^{-1}w\partial_{\nu}w \right)\,d\mathcal{H}^1,
\end{align}
where $\Phi$ is defined in (\ref{Phidef}) and we recall that $w(\bm{x}) = u(\bm{x}_0 + \bm{x})$. Moreover, by (\ref{limitidentity}) and (\ref{IBP}), we can rewrite (\ref{Phi'}) as
\[
\Phi'(r) = 2r^{-2}\int_{\partial B_r(\bm{0})}\left(\partial_{\nu}w - \frac{w}{r}\right)^2\,d\mathcal{H}^1 - r^{-3}\int_{B_r(\bm{0})}\chi_{\{w > 0\}}y\,d\bm{x},
\]
and the desired formula follows by integration.
\end{proof}

\begin{rmk} 
In view of \Cref{bgl}, the assumptions of \Cref{bmf} are satisfied by local minimizers which are symmetric in the sense of \Cref{sm}, provided $\bm{x}_0$ is an accumulation point for $\partial \{ u > 0\}$. Moreover, under the additional assumption that $\bm{x}_0$ is an isolated accumulation point for $\partial \{u > 0\}$ on $\partial \Omega$, the powerful regularity result of \cite{MR3916702} is not needed for the proof of \Cref{bmf}.
\end{rmk}

\begin{cor}
\label{Phi0}
Let $\Phi$ be defined as in \Cref{bmf}. Then $\Phi$ has finite right-limit at zero, i.e.,
\[
\lim_{\rho \to 0^+}\Phi(\rho) \eqqcolon \Phi(0^+) \in \RR.
\]
\end{cor}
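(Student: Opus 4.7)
The plan is to exploit the monotonicity formula from \Cref{bmf} to exhibit $\Phi$ (up to a small continuous correction) as a bounded monotone function of $r$, which then automatically has a finite right-limit at zero.

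First I would show that $\Phi$ itself is bounded on $(0, r_0)$. Under the assumptions of \Cref{bmf}, $u$ is Lipschitz continuous on $B_\mu(\bm{x}_0)$ with constant $C$. Since $\bm{x}_0 = (-\la/2, \ga) \in \Gamma_\ga$ and $u_0(\bm{x}_0) = 0$, we have $u(\bm{x}_0) = 0$, and hence $u(\bm{x}) \le C|\bm{x} - \bm{x}_0|$ in $B_{r_0}(\bm{x}_0)$. Estimating the three pieces of $\Phi$ individually then gives
\[
r^{-2}\!\int_{B_r(\bm{x}_0)}|\nabla u|^2\, d\bm{x} \le \pi C^2,\quad r^{-2}\!\int_{B_r(\bm{x}_0)}\chi_{\{u>0\}}(h-y)\, d\bm{x} \le \pi h,\quad r^{-3}\!\int_{\partial B_r(\bm{x}_0)}u^2\, d\mathcal{H}^1 \le 2\pi C^2,
\]
so $|\Phi(r)| \le M$ for some constant $M$ and all $r \in (0, r_0)$.

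Next I would define
\[
G(r) \coloneqq r^{-3}\int_{B_r(\bm{x}_0)}\chi_{\{u>0\}}(y - \ga)\, d\bm{x}, \qquad H(\sigma) \coloneqq \int_0^\sigma G(r)\, dr.
\]
On $B_r(\bm{x}_0)$ we have $|y - \ga| \le r$, so $|G(r)| \le r^{-3}\cdot r \cdot \pi r^2 = \pi$. Hence $H$ is well defined, Lipschitz continuous on $[0, r_0)$, and $H(\sigma) \to 0$ as $\sigma \to 0^+$.

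The key step is then the following observation: by the monotonicity formula of \Cref{bmf}, for a.e.\@ $0 < \rho < \sigma < r_0$,
\[
\bigl[\Phi(\sigma) + H(\sigma)\bigr] - \bigl[\Phi(\rho) + H(\rho)\bigr] = \int_{\rho}^{\sigma}r^{-2}\int_{\partial B_r(\bm{x}_0)}2\bigl(\partial_{\nu}u - \tfrac{u}{r}\bigr)^2\,d\mathcal{H}^1\, dr \ge 0.
\]
Since both $\Phi$ and $H$ are continuous in $r$ (the former by a polar-coordinates computation, the latter by construction), this inequality extends to every pair $0 < \rho < \sigma < r_0$. Thus $r \mapsto \Phi(r) + H(r)$ is nondecreasing and bounded on $(0, r_0)$, and therefore has a finite right-limit at $0$. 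Combined with $H(\sigma) \to 0$, this yields
\[
\Phi(0^+) = \lim_{\sigma \to 0^+}\bigl[\Phi(\sigma) + H(\sigma)\bigr] \in \RR,
\]
as desired.

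The main subtlety to watch for is that the non-negative term $\int_0^\sigma r^{-2}\int_{\partial B_r}2(\partial_\nu u - u/r)^2\,d\mathcal{H}^1\, dr$ could a priori be infinite (indeed the Lipschitz bound only gives an $O(r^{-1})$ integrand), so one must \emph{not} try to take $\rho \to 0^+$ directly in $A(\sigma) = \int_0^\sigma F$ and balance terms. Working instead with the monotonicity of $\Phi + H$ sidesteps this issue entirely: the finiteness of $A(\sigma)$ then follows a posteriori as a bonus.
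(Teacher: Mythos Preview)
Your proof is correct and follows essentially the same strategy as the paper: use the Lipschitz bound at $\bm{x}_0$ to show $\Phi$ is bounded, and use the monotonicity formula to extract a monotone quantity whose limit at $0^+$ exists. The only cosmetic difference is that the paper splits the correction term $-\int_\rho^\sigma r^{-3}\int_{B_r}\chi_{\{u>0\}}(y-\ga)$ into its $\{y\ge\ga\}$ and $\{y<\ga\}$ parts (each monotone in $\rho$) rather than absorbing it into your antiderivative $H$; your packaging is arguably cleaner, and your closing remark about not taking $\rho\to 0$ directly in the nonnegative term is a point the paper leaves implicit.
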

\begin{proof}
Fix $0 < \sigma < r_0$ and consider $\rho < \sigma$. By \Cref{bmf}
\[
\Phi(\sigma) = \Phi(\rho) + A(\rho, \sigma) + B(\rho, \sigma) + C(\rho, \sigma),
\]
where 
\begin{align*}
A(\rho,\sigma) \coloneqq &\, \int_{\rho}^{\sigma}r^{-2} \int_{\partial B_r(\bm{x}_0)}2\left(\partial_{\nu}u(\bm{x}) - \frac{u(\bm{x})}{r}\right)^2\,d\mathcal{H}^1(\bm{x})dr, \\
B(\rho,\sigma) \coloneqq &\,  - \int_{\rho}^{\sigma}r^{-3}\int_{B_r(\bm{x}_0)}\chi_{\{u > 0\}}(y - \ga)\chi_{\{y \ge \ga\}}\,d\bm{x}dr, \\
C(\rho, \sigma) \coloneqq &\, - \int_{\rho}^{\sigma}r^{-3} \int_{B_r(\bm{x}_0)}\chi_{\{u > 0\}}(y - \ga)\chi_{\{y < \ga\}}\,d\bm{x}dr.
\end{align*}
Notice that the maps $\rho \mapsto A(\rho, \sigma)$ and $\rho \mapsto C(\rho, \sigma)$ are decreasing, while $r \mapsto B(\rho, \sigma)$ is increasing. Then
\begin{align*}
\lim_{\rho \to 0^+} A(\rho, \sigma) + C(\rho, \sigma) = &\, \sup\{A(\rho, \sigma) + C(\rho, \sigma) : 0 < \rho < \sigma\}, \\ 
\lim_{\rho \to 0^+} B(\rho, \sigma) = &\, \inf\{B(\rho, \sigma) : 0 < \rho < \sigma\} < \infty.
\end{align*}
In turn, $\Phi$ admits a limit as $\rho \to 0^+$ as it was claimed. Moreover, the fact that $|\Phi(0^+)| < \infty$ follows upon recalling that $u$ is Lipschitz continuous in a neighborhood of $\bm{x}_0$ and $u(\bm{x}_0) = 0$. Hence $u(\bm{x})^2 \le C|\bm{x} - \bm{x}_0|^2$, and so $\Phi$ is bounded (see (\ref{Phidef})).
\end{proof}

\begin{cor}
Under the assumptions of \Cref{bmf}, let $w$ be a blow-up limit of $u$ at $\bm{x}_0$ with respect to the sequence $\{\rho_n\}_n$. Then
\begin{equation}
\label{gradwzw}
\nabla w(\bm{z}) \cdot \bm{z} = w(\bm{z}) \quad \text{ for } \mathcal{L}^2\text{-a.e.\@ } \bm{z} \in \RR^2.
\end{equation}
\end{cor}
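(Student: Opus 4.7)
The strategy is to combine the Weiss-type monotonicity identity of \Cref{bmf} with the existence of $\Phi(0^+)$ established in \Cref{Phi0}, applied at the pair of small scales $\rho_n r_1$ and $\rho_n r_2$ for arbitrary $0 < r_1 < r_2$. Since both endpoints tend to $0$, the increment $\Phi(\rho_n r_2) - \Phi(\rho_n r_1)$ must vanish in the limit, forcing the non-negative ``squared radial-derivative deficit'' term in the formula to go to zero. Rescaling this vanishing integral to the blow-up coordinates identifies the limit quantity as a convex integral of $\nabla u_n \cdot \bm{z} - u_n$, and weak lower semicontinuity hands us $\nabla w \cdot \bm{z} = w$.

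More concretely, I would first fix $0 < r_1 < r_2$, take $n$ large enough that $\rho_n r_2 < r_0$, and write
\[
\Phi(\rho_n r_2) - \Phi(\rho_n r_1) = 2\int_{\rho_n r_1}^{\rho_n r_2} r^{-2}\int_{\partial B_r(\bm{x}_0)}\Bigl(\partial_{\nu}u - \frac{u}{r}\Bigr)^2 d\mathcal{H}^1 dr - \int_{\rho_n r_1}^{\rho_n r_2} r^{-3}\int_{B_r(\bm{x}_0)}\chi_{\{u > 0\}}(y - \ga)\, d\bm{x} dr.
\]
Substituting $r = \rho_n s$ and $\bm{x} = \bm{x}_0 + \rho_n \bm{z}$, using $\nabla u_n(\bm{z}) = \nabla u(\bm{x}_0 + \rho_n \bm{z})$, $u(\bm{x}_0 + \rho_n \bm{z}) = \rho_n u_n(\bm{z})$, and the fact that the outward unit normal to $\partial B_r(\bm{x}_0)$ at $\bm{x}$ coincides with $\bm{z}/|\bm{z}|$, the first integral becomes $2\int_{r_1}^{r_2} s^{-2}\int_{\partial B_s(\bm{0})}(\partial_{\nu}u_n - u_n/s)^2 d\mathcal{H}^1 ds$, while the second integral picks up an extra factor of $\rho_n$ (since $y - \ga = \rho_n t$) and is therefore $O(\rho_n)$ as $n \to \infty$, the integrand being bounded on the annular region of integration. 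Since the left-hand side tends to $\Phi(0^+) - \Phi(0^+) = 0$ by \Cref{Phi0}, we conclude
\[
\lim_{n \to \infty}\int_{r_1}^{r_2} s^{-2}\int_{\partial B_s(\bm{0})}\Bigl(\partial_{\nu}u_n - \frac{u_n}{s}\Bigr)^2 d\mathcal{H}^1 ds = 0.
\]

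Applying the coarea formula, the above integral equals
\[
\int_{B_{r_2}(\bm{0}) \setminus B_{r_1}(\bm{0})}\frac{\bigl(\nabla u_n(\bm{z}) \cdot \bm{z} - u_n(\bm{z})\bigr)^2}{|\bm{z}|^4}\, d\bm{z}.
\]
The integrand is convex in the pair $(u_n, \nabla u_n)$, and on the annulus $B_{r_2} \setminus B_{r_1}$ the weight $|\bm{z}|^{-4}$ is bounded. The convergences in \eqref{blowupconv} give $u_n \to w$ uniformly and $\nabla u_n \rightharpoonup \nabla w$ weakly in $L^2$ on bounded sets, so the classical weak lower semicontinuity theorem for convex integral functionals (e.g.\@ applied to $F(\bm{z},p,\xi) = |\bm{z}|^{-4}(\xi \cdot \bm{z} - p)^2$) yields
\[
\int_{B_{r_2}(\bm{0}) \setminus B_{r_1}(\bm{0})}\frac{\bigl(\nabla w(\bm{z}) \cdot \bm{z} - w(\bm{z})\bigr)^2}{|\bm{z}|^4}\, d\bm{z} = 0.
\]
Hence $\nabla w(\bm{z}) \cdot \bm{z} = w(\bm{z})$ for $\mathcal{L}^2$-a.e.\@ $\bm{z}$ in the annulus, and since $0 < r_1 < r_2$ were arbitrary, \eqref{gradwzw} follows on all of $\RR^2$.

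The only mildly delicate point is the bookkeeping in the rescaling: one must verify that the normalization factors $\rho_n$ conspire to leave the ``deficit'' integral scale-invariant while making the auxiliary $(y - \ga)$ correction term vanish. Beyond this, the weak lower semicontinuity step is routine because we stay away from the origin, keeping the weight $|\bm{z}|^{-4}$ bounded.
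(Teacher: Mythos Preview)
Your proposal is correct and follows essentially the same route as the paper: rescale the monotonicity identity of \Cref{bmf} at radii $\rho_n r_1, \rho_n r_2$, use \Cref{Phi0} to kill the left-hand side, observe that the $(y-\ga)$ correction carries an extra $\rho_n$ and disappears, and then pass to the limit in the nonnegative deficit term via weak lower semicontinuity to conclude on each annulus. The paper's write-up is terser (it compresses the rescaling and the lower-semicontinuity step into a single displayed inequality), but the argument is the same.
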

\begin{proof}
For every $r > 0$ and $n$ large enough so that $\rho_nr < r_0$, by the change of variables $\bm{x} = \bm{x}_0 + \rho_n\bm{z}$ we see that (\ref{Phidef}) can be rewritten as
\[
\Phi(\rho_nr) = r^{-2}\int_{B_r(\bm{0})}\left(|\nabla u_n|^2 + \chi_{\{u_n > 0\}}(h - \ga - \rho_nt)\right)\,d\bm{z} - r^{-3}\int_{\partial B_r(\bm{0})}u_n^2\,d\mathcal{H}^1,
\]
where the functions $u_n$ are defined as in (\ref{blowup}). Therefore, it follows from \Cref{bmf} that for $\mathcal{L}^1$-a.e.\@ $0 < R < S$ and $n$ large enough we have the formula
\[
\Phi(\rho_nS) - \Phi(\rho_nR) = \int_R^Sr^{-2}\int_{\partial B_r(\bm{0})}2\left(\partial_{\nu}u_n - \frac{u_n}{r}\right)^2\,d\mathcal{H}^1dr - \int_{R}^{S}r^{-3}\int_{B_r(\bm{0})}\chi_{\{u_n > 0\}}\rho_nt\,d\bm{x}dr.
\]
Letting $n \to \infty$, by \Cref{Phi0}, we obtain
\begin{align}
\label{posint}
0 = \lim_{n \to \infty} \Phi(\rho_nS) - \Phi(\rho_nR) = &\ \liminf_{n \to \infty} \int_R^Sr^{-2}\int_{\partial B_r(\bm{0})}2\left(\partial_{\nu}u_n - \frac{u_n}{r}\right)^2\,d\mathcal{H}^1dr \notag \\
\ge &\ \int_{B_S(\bm{0}) \setminus B_R(\bm{0})}2|\bm{z}|^{-4}\left(\nabla w(\bm{z}) \cdot \bm{z} - w(\bm{z})\right)^2\,d\bm{z}.
\end{align}
In turn, the integrand in (\ref{posint}) must be zero $\mathcal{L}^2$-a.e.\@ in $B_S(\bm{0}) \setminus B_R(\bm{0})$. By the arbitrariness of $R, S$, this concludes the proof.
\end{proof}

\section{Proof of \Cref{mainthm}}
In this section we present the proof of \Cref{mainthm}. The fundamental step in the proof is the following characterization of the possible blow-up limits at the point $\bm{x}_0$, defined as in \Cref{busec} (see (\ref{blowup}) and (\ref{blowupconv})).
\begin{thm}
\label{class}
Under the assumptions of \Cref{bgl}, let $w$ be a blow-up limit of $u$ at $\bm{x}_0$. Then either $w$ is identically equal to zero or $w(s,t) = (h - \ga)(-t)_+$.
\end{thm}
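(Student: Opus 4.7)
The plan is to exploit the $1$-homogeneity of the blow-up limit $w$, together with its global minimality, to enumerate the possible shapes of $w$ and rule out the unwanted ones. From the corollary to \Cref{bmf} we have $\nabla w(\bm{z})\cdot\bm{z}=w(\bm{z})$ almost everywhere, so $w$ is $1$-homogeneous. Writing $w(r,\theta)=r\psi(\theta)$, harmonicity of $w$ on $\{w>0\}$ becomes the ODE $\psi''+\psi=0$ on each connected component of $\{\psi>0\}$; the Dirichlet condition $w(0,t)=0$ for $t>0$ forces $\psi(\pi/2)=0$; and the $\la$-periodicity of $u$ combined with its $x$-evenness passes to the limit as the reflection symmetry $\psi(\pi-\theta)=\psi(\theta)$. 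Finally, passing to the limit in the Bernoulli identity $\partial_\nu u=\sqrt{h-y}$ of \Cref{exofmin}(iv) via the Hausdorff convergence in \Cref{fbchi} yields $|\psi'|=\sqrt{h-\ga}$ at each interior endpoint of an arc of $\{\psi>0\}$.

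The ODE $\psi''+\psi=0$ with $\psi$ vanishing at both endpoints of an arc forces the arc to have length $\pi$, and then $\psi(\theta)=\sqrt{h-\ga}\sin(\theta-\theta_0)$ thereon. A direct enumeration compatible with the reflection symmetry and the constraint $\psi(\pi/2)=0$ produces exactly three candidates for $\psi$: (a) $\psi\equiv 0$, giving $w\equiv 0$; (b) $\supp\psi=[\pi,2\pi]$, giving $w(s,t)=\sqrt{h-\ga}(-t)_+$; and (c) $\supp\psi=[-\pi/2,\pi/2]\cup[\pi/2,3\pi/2]$, giving $w(s,t)=\sqrt{h-\ga}|s|$.

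The decisive step is the elimination of case (c). Assuming for contradiction that $w(s,t)=\sqrt{h-\ga}|s|$, fix $R>0$ and let $D_R:=B_R(\bm{0})\setminus\{(0,y):0\le y\le R\}$ be the corresponding slit domain. Let $v$ denote the unique harmonic extension in $D_R$ with Dirichlet data $v=w$ on $\partial B_R$ and $v=0$ on both sides of the slit; by the strong maximum principle $v>0$ in $D_R$, so $|\{v>0\}|=\pi R^2=|\{w>0\}|$. Extending $v$ by $w$ outside $B_R$ places $v\in\K(w,R)$. A direct distributional computation shows that $\Delta w$, viewed as a measure on $\RR^2$, equals $2\sqrt{h-\ga}$ times the one-dimensional Hausdorff measure on the line $\{s=0\}$; in particular it is nonzero on the symmetry segment $\{s=0,\,-R<y<0\}\subset D_R$, so $w$ fails to be harmonic in $D_R$. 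Since $v$ and $w$ share the same trace on $\partial D_R$, the Dirichlet principle delivers the strict inequality $\int_{B_R}|\nabla v|^2<\int_{B_R}|\nabla w|^2$, which combined with $|\{v>0\}|=|\{w>0\}|$ yields $\F_h(v)<\F_h(w)$, contradicting the global minimality of $w$ guaranteed by \Cref{bumin}.

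The main obstacle is precisely this last step: the two-plane candidate $\sqrt{h-\ga}|s|$ has empty reduced free boundary, and so no classical Bernoulli identity to violate by a standard interior comparison. The resolution is that the $s$-evenness of admissible competitors imposes an implicit natural Neumann condition on the symmetry segment $\{s=0,\,y<0\}$ which is incompatible with the linear growth of $|s|$; the slit-domain harmonic competitor $v$ constructed above extracts exactly this defect and strictly lowers the energy.
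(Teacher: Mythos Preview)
Your proposal is correct and follows essentially the same route as the paper: $1$-homogeneity from the corollary to \Cref{bmf}, the ODE $\psi''+\psi=0$ forcing each positivity arc to have opening $\pi$, the reflection symmetry together with $\psi(\pi/2)=0$ reducing the candidates to $0$, $\sqrt{h-\ga}\,(-t)_+$, and $\sqrt{h-\ga}\,|s|$, and finally the elimination of the last one by showing it is not a minimizer of $\F_h$ in $\K(w,R)$, in contradiction with \Cref{bumin}. The paper merely asserts this last non-minimality; you supply an explicit competitor (the harmonic extension in the slit ball), which is a legitimate and clean way to make the paper's one-line claim precise.

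Two small remarks. First, your justification of the constant $\sqrt{h-\ga}$ by ``passing to the limit in the Bernoulli identity via the Hausdorff convergence in \Cref{fbchi}'' is not quite complete as stated: Hausdorff convergence of the free boundaries and $L^1$ convergence of the indicators do not by themselves deliver convergence of $\partial_\nu u_n$ on the moving boundaries. The cleaner route (and the one implicit in the paper) is to use \Cref{bumin}: since $w$ is itself a minimizer of the constant-coefficient functional $\F_h$, the interior regularity theory of \cite{MR618549} applied to $w$ gives $|\nabla w|=\sqrt{h-\ga}$ on its smooth free boundary, which then fixes the amplitude of $\psi$ on each arc. Second, note that the statement as printed carries the constant $(h-\ga)$; your $\sqrt{h-\ga}$ is the one consistent with the Bernoulli condition and with the functional $\F_h$ in \eqref{Jbu}.
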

\begin{proof}
\textbf{Step 1:} We begin by showing that $w$ is a positively homogenous function of degree one. To see this let $\bm{z} \in \{w > 0\}$ and notice that
\[
\frac{d}{dt}\left(\frac{1}{t}w(t\bm{z})\right) = \frac{1}{t}\nabla w(t\bm{z})\cdot \bm{z} - \frac{1}{t^2}w(t\bm{z}) = \frac{1}{t^2}\left(\nabla w(t\bm{z}) \cdot t\bm{z} - w(t\bm{z})\right) = 0
\]
for every $t > 0$ such that $w(t\bm{z}) > 0$, where in the last equality we have used (\ref{gradwzw}). Consequently, it must be the case that $w(t\bm{z}) = tw(\bm{z})$ for every $t > 0$ such that $w(t\bm{z}) > 0$. From this we deduce that the entire ray $\{t \bm{z} : t \in \RR_+\}$ must necessarily be contained in $\{w > 0\}$. In particular, each connected component of $\{w > 0\}$ is a sector with vertex at the origin. Next, we claim that the opening angle of every such sector is $\pi$, i.e., that each connected component of $\{w > 0\}$ is a half-plane passing through the origin. To this end, we can find a rotation $R \in SO(2)$, a set of polar coordinates $(r,\theta)$, and a function $f$ in such a way that 
\[
f(r,\theta) = w(R(r\cos \theta, r\sin \theta)),
\]
and
\begin{equation}
\label{fharm}
\left\{
\arraycolsep=1.4pt\def\arraystretch{1.6}
\begin{array}{rll}
\Delta f = & 0 & \text{ in } S_{\alpha} \coloneqq \{(r,\theta) : 0 < r < \infty, 0 < \theta < \alpha\}, \\
f = & 0 & \text{ on } \partial S_{\alpha}.
\end{array}
\right.
\end{equation}
Notice that the homogeneity of $w$ implies that 
\[
f(r,\theta) = rf(1,\theta) = rh(\theta),
\]
for a function $h$ which satisfies
\[
h''(\theta) + h(\theta) = 0.
\]
In turn, $h(\theta) = c_1\cos \theta + c_2 \sin \theta$. Moreover, the boundary conditions in (\ref{fharm}) give that $c_1 = 0$ and $c_2\sin \alpha = 0$. Since $f > 0$ in $S_{\alpha}$, then it must be the case that $\alpha = \pi$. 
\newline
\textbf{Step 2:} Since $u$ is symmetric about the line $\{x = - \la/2\}$, then $u_n$, defined as in (\ref{blowup}), is symmetric about the $t$-axis, and so is $w$. This, together with the fact that $w(0,t) = 0$ for $t \ge 0$, shows that if $w$ is not identically equal to zero then either $w(s,t) = (h - \ga)(-t)_+$ or $w(s,t) = (h - \ga)|s|$. To conclude, it is enough to notice that $w(s,t) = (h - \ga)|s|$ does not minimize the functional $\F_h$ over the set $\K(w,1)$ (see (\ref{Jbu}) and (\ref{Kw})), since this would be in contradiction with \Cref{bumin}. 
\end{proof}

\begin{proof}[Proof of \Cref{mainthm}]
For $g$ defined as in (\ref{gdef}), assume for the sake of contradiction that 
\[
\liminf_{y \to \infty}\frac{|g(y) - g(\ga)|}{|y - \ga|} = \alpha < \infty,
\]
let $\{y_n\}_n$ be a sequence for which the limit is realized, and assume without loss of generality that $\{y_n\}_n$ is monotone. Let $\rho_n \coloneqq |y_n - \ga|$ and notice that for $n$ large enough
\[
\rho_n \le \sqrt{(g(y_n) - g(\ga))^2 + (y_n - \ga)^2} \le \beta|y_n - \ga| = \beta\rho_n, \quad \text{ where } \beta \coloneqq \sqrt{\alpha ^2 + 2}.
\]
In turn, \Cref{nondeg} gives that every blow up of $u$ at $\bm{x}_0$ with respect to the sequence $\{\rho_n\}_n$ is not identically equal to zero. Then, it follows from \Cref{class} that the half-plane solution
\begin{equation}
\label{onlybu}
w(s,t) = (h - \ga)(-t)_+
\end{equation}
is the unique blow-up limit. Assume first that $y_n \to \ga^+$, set $\rho_n \coloneqq y_n - \ga$ and let $u_n$ be defined as in (\ref{blowup}). Notice that by (\ref{gn})
\begin{equation}
\label{gn1toalpha}
0 \le g_n(1) = \frac{g(y_n) - g(\ga)}{y_n - \ga} = \frac{g(y_n) + \frac{\la}{2}}{y_n - \ga} \to \alpha.
\end{equation}
On the other hand, since $\{t \ge 0\} \subset \{w = 0\}$ by (\ref{onlybu}), it must be the case that $u_n \equiv 0$ in $B_{1/2}(\alpha + 1,1)$ by \Cref{fbchi}. This contradicts (\ref{gn1toalpha}).
Next, we assume that $y_n \to \ga^-$. Then $g_n(-1) \to \alpha$ and by the uniform convergence of $u_n$ to $w$ we see that
\[
0 = u_n(g_n(-1),-1) \to w(\alpha, -1) = h - \ga > 0.
\]
This concludes the proof.
\end{proof}

\section*{Acknowledgements}
This paper is part of the first author's Ph.\@ D.\@ thesis at Carnegie Mellon University. The authors acknowledge the Center for Nonlinear Analysis (NSF PIRE Grant No.\@ OISE-0967140) where part of this work was carried out. The research of G.\@ Gravina and G.\@ Leoni was partially funded by the National Science Foundation under Grants No.\@ DMS-1412095 and DMS-1714098. G.\@ Gravina also acknowledges the support of the research support programs of Charles University: PRIMUS/19/SCI/01 and UNCE/SCI/023. G.\@ Leoni would like to thank Ovidiu Savin and Eugen Varvaruca for their helpful insights. The authors would also like to thank Luis Caffarelli, Ming Chen, Craig Evans and Ian Tice for useful conversations on the subject of this paper.

\bibliographystyle{alpha}
\bibliography{waterwaves2}

\end{document}